\def\boxit{$\sqcap\kern-8pt\sqcup$}
\newcommand{\Z}{{\mathbb Z}}
\newtheorem{thm}{Theorem}[section]
\newtheorem{cor}[thm]{Corollary}
\newtheorem{lem}[thm]{Lemma}
\newtheorem{rem}[thm]{Remark}
\newtheorem{examp}[thm]{Example}
\newcommand{\R}{\mathbb R}
\newcommand{\bases}{{\mathcal B}}
\newcommand{\st}{\ensuremath{{\rm st}}}
\newcommand{\vol}{\ensuremath{{\rm vol}}}
\newcommand{\ideals}{\ensuremath{{\mathcal{I}}}}
\newcommand{\joinirrs}{\ensuremath{\mathcal{J}}}
\begin{document}
\title[On lattice path matroid polytopes]{On lattice path matroid polytopes: integer points and Ehrhart polynomial}
\thanks{The first author was partially supported by ANR grant GATO ANR-16-CE40-0009-01. The second author was supported by the Israel Science Foundation grant No. 1452/15 and the European Research Council H2020 programme grant No. 678765. The last two authors were partially supported by ECOS Nord project M13M01.}
\author[Knauer]{Kolja Knauer}
\address{Laboratoire d'Informatique Fondamentale, Aix-Marseille Universit\'e and CNRS,
Facult\'e des Sciences de Luminy, F-13288 Marseille Cedex 9, France}
\email{kolja.knauer@lif.univ-mrs.fr}
\author[Mart\'inez-Sandoval]{Leonardo Mart\'inez-Sandoval}
\address{Dept. of Computer Science, Faculty of Natural Sciences, Ben-Gurion University of the Negev, Beer Sheva, 84105, Israel}
\email{leomtz@im.unam.mx}
\author[Ram\'irez Alfons\'in]{Jorge Luis Ram\'irez Alfons\'in}
\address{Institut Montpelliérain Alexander Grothendieck, Universit\'e de Montpellier, Place Eug\`ene Bataillon, 34095 Montpellier Cedex, France}
\email{jorge.ramirez-alfonsin@umontpellier.fr}


\begin{abstract}
In this paper we investigate the number of integer points lying in dilations of lattice path matroid polytopes. We give a characterization of such points as polygonal paths in the diagram of the lattice path matroid. Furthermore, we prove that lattice path matroid polytopes
are affinely equivalent to a family of distributive polytopes. As applications we obtain two new infinite families of matroids verifying a conjecture 
of De Loera et.~al. and present an explicit formula of the Ehrhart polynomial for one of them.
\end{abstract}

\maketitle

\section{Introduction}

For general background on matroids we refer the reader to~\cite{Ox11,We76}. 
A {\em matroid}  $M=(E, \bases )$ of {\em rank} $r=r(M)$ is a finite set $E=\{1,\dots ,n\}$
together with a non-empty collection $\bases =\bases(M)$ of 
$r$-subsets of $E$ (called the {\em bases} of $M$) satisfying the following {\em basis exchange axiom}:
$$\text{if } B_1,B_2\in \bases \text{ and } e\in B_1\setminus B_2, \text{ then there exists }$$
$$ f\in B_2\setminus B_1 \text{ such that } (B_1-e)+ f \in \bases.$$
For a matroid $M=(E,\bases )$, the {\it matroid basis polytope} $P_M$ of $M$ is
defined as the convex hull of the incidence vectors of bases of $M$, that is,
$$
P_M:= \text{conv} \left\{\sum_{i\in B}e_{i} : B \text{~a base of~} M\right\},
$$
here $e_{i}$ is the $i^{th}$ standard basis vector in $\R^{n}$.  It is well-known~\cite{Fei-05} that $\dim(P_M)=n-c$ where $c$ is the number of {\em connected components} of $M$.  
\smallskip

Let $k\geq 0$ be an integer and let $P\subseteq \R^n$ be a polytope. We define $kP:=\{kp : p\in P\}$ and the function
$$L_P(k):=\#(kP\cap\Z^{n}).$$ Note that $L_P(0)=1$. It is well-known~\cite{Ehr} that for integral polytopes, including the case of matroid basis polytopes, the function $L_P$ extends to a polynomial on $\mathbb{R}$, called the {\em Ehrhart polynomial} of $P$. The {\em Ehrhart series} of a polytope $P$ is the infinite series 
$$Ehr_P(z)=\sum\limits_{k\ge 0}L_P(k)z^k.$$

{As a consequence  of the above polynomiality result we have} that if $P\subseteq \R^n$ is an integral convex polytope of dimension $n$ then its Ehrhart series is a rational function, 
$$Ehr_P(z)=\frac{h^*_P(z)}{(1-t)^{n+1}}=\frac{h^*_0+h^*_1t+\cdots +h^*_{n-1}t^{n-1}+h_n^*t^n}{(1-t)^{n+1}}.$$
 
The coefficients of $h^*_P$ are the entries of the {\em $h^*$-vector} of $P$. The function $L_P(t)$
can be expressed as

\begin{equation}\label{er-ser}
L_P(t)=\sum_{j=0}^nh^*_j{t+n-j\choose n}.
\end{equation}


In this paper, we first investigate the  function $L_{P_M}(t)$ when $M$ is a {\em lattice path matroid}.  
{The class of lattice path matroids was first introduced by Bonin, de Mier, and Noy~\cite{Bon-03}. Many different aspects of lattice path matroids have been studied:  excluded minor results~\cite{Bon-10}, algebraic geometry notions~\cite{Del-12,Sch-10,Sch-11}, the Tutte polynomial~\cite{Bon-07,KMR,Mor-13}, the associated basis polytope in connection with the combinatorics of Bergman complexes~\cite{Del-12}, its facial structure~\cite{An-17,Bid-12}, specific decompositions in relation with Lafforgue's work
~\cite{Cha-11} as well as the related cut-set expansion conjecture~\cite{Coh-15}.}
\smallskip

{In Section~\ref{sec:lpm}, we review some notions on lattice path matroids and introduce the interesting subclass of {\em snakes}. In Section~\ref{sec:basepol}, we provide a combinatorial characterization (in terms of some polygonal paths in the standard diagram representation of lattice path matroids $M$) of the points in $kP_M$ (Theorem~\ref{thm:polypaths}) as well as} of the integer points in $kP_M$ (Corollaries~\ref{cor:integ} and~\ref{cor:inter}). As an application, we obtain an explicit formula for $L_{P_M}(t)$ for an infinite family of snakes (Theorem~\ref{thm:ehrhart}) and a matrix formula for $L_{P_M}(t)$ for snakes in general (Theorem~\ref{thm:count}).
\smallskip

{We then carry on by studying the {\em distributive lattice} structure associated to $P_M$ and its relation with
{\em distributive polytopes} (which are those polytopes whose point set forms a distributive lattice as a sublattice of the componentwise ordering of $\mathbb{R}^n$). In Section~\ref{sec:lattice}, we shall prove  that there exists a bijective affine transformation taking $P_M$ to a full-dimensional {\em distributive integer} polytope $Q_M$ (Theorem~\ref{thm:dpoly}) implying that $L_{P_M}(t)=L_{Q_M}(t)$.}
\smallskip

We use this to make a connection between a natural distributive lattice associated to $Q_M$ (and thus to $P_M$) and their corresponding {\em chain partitioned posets}  (Theorem~\ref{thm:poset}). { As an application, we present a characterization of snakes via the so-called {\em order polytopes} (Theorem~\ref{thm:orderpoly}). We then use the latter and some known results on the Ehrhart polynomial of order polytopes in order to prove unimodality of the $h^*$-vector for two infinite families of snakes (Theorem~\ref{thm:uni1}). This provides new evidence for a challenging conjecture of De Loera, Haws, and K\"oppe~\cite{Loer-09}  which was only known to hold for the class of rank two uniform matroids and for a finite list of examples.} {In Section~\ref{sec:concluding}, we
end by discussing further cases in which this conjecture holds.}



\section{Lattice path matroids}\label{sec:lpm}


A path in the plane is a {\em lattice path}, if it starts at the origin and only does steps of the form $+(1,0)$ and $+(0,1)$ and ends at a point $(m,r)$. One way to encode a lattice path $P$ is therefore  simply to identify it with a vector $\st(P)=(P_1,\dots ,P_{r+m})$, where $P_i\in\{0,1\}$ corresponds to the $y$-coordinate of the $i^{th}$ step of $P$ for all $1\leq i\leq r+m$. The vector $\st(P)$ is the \emph{step vector} of $P$. We will often identify $P$ with its step vector without explicitly mentioning it. Note that since $\st(P)\in\{0,1\}^{r+m}$ the path $P$ can also be identified with a subset of $\{1, \ldots, r+m\}$ or cardinality $r$.
%
Let $L, U$ be two lattice paths from $(0,0)$ to $(m,r)$, such that $L$ never goes above $U$. 
%
%
The {\em lattice path matroid (LPM)} associated to $U$ and $L$ is the 
matroid $M[U,L]$ on the ground set $\{1,\dots ,m+r\}$ whose base set corresponds to all lattice paths from  $(0,0)$ to $(m,r)$ never going below $L$ and never going above $U$.
In~\cite[Theorem 3.3]{Bon-03} it was proved that $M[U,L]$ is indeed a matroid. See Figure~\ref{fig:snake-ex} for an illustration.


\begin{figure}[ht] 
\centering
 \includegraphics[width=1\textwidth]{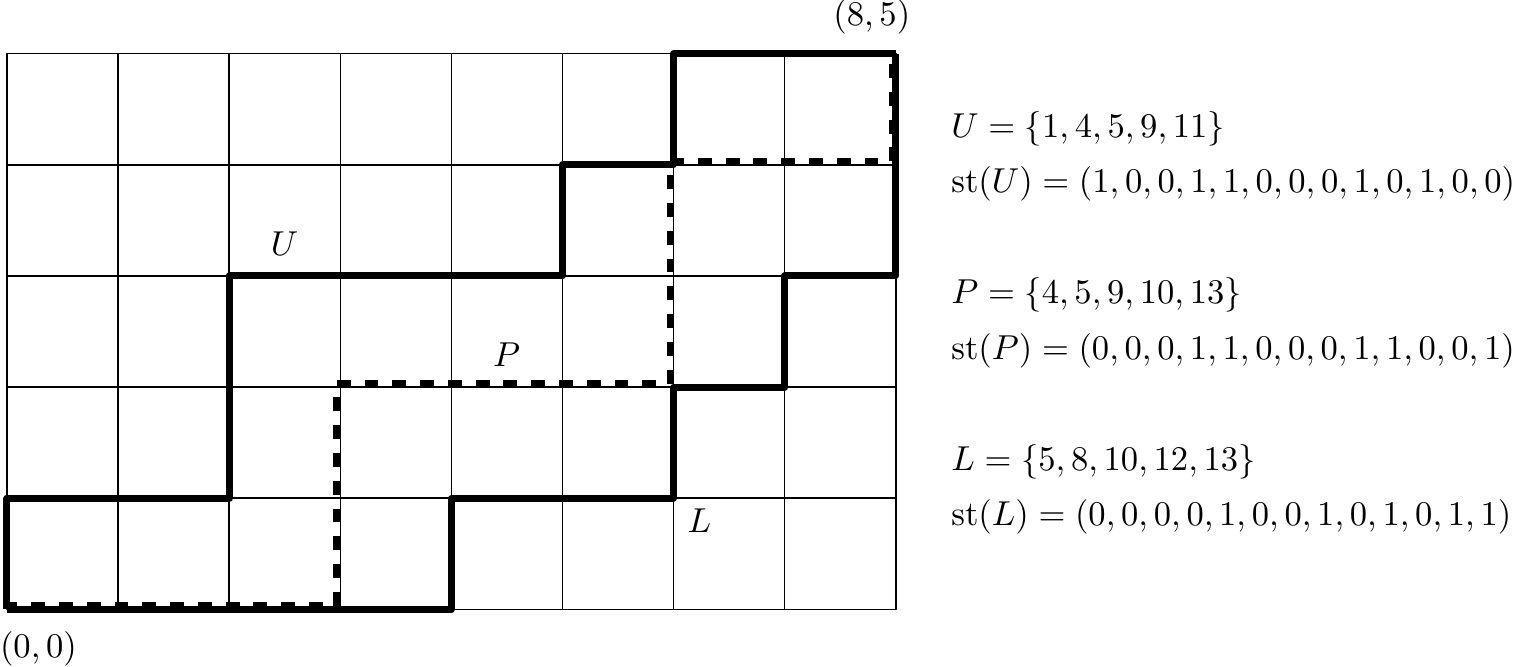}
 \caption{Left: Lattice paths $U$ and $L$ from $(0, 0)$ to $(8, 5)$ and a path $P$ staying between $U$ and $L$ in the diagram of $M[U,L]$. Right: 
 Representations of $U$, $L$, and $P$ as subsets of $\{1,\ldots, 13\}$ and as step vectors. }
 \label{fig:snake-ex}
\end{figure} 

%

It is known~\cite{Bon-03} that the class of LPMs is closed under deletion, contraction, and duality. Indeed, to see the latter, for an LPM $M$, a base in the the dual matroid $M^*$ consists of the 0-steps of a lattice path in the diagram of $M$. Thus, reflecting the diagram of $M$ along the diagonal $x=y$ yields a diagram for $M^*$ and shows that $M^*$ is an LPM as well. See Figure~\ref{fig:duallpm}.

 \begin{figure}[ht] 
\centering
 \includegraphics[width=.5\textwidth]{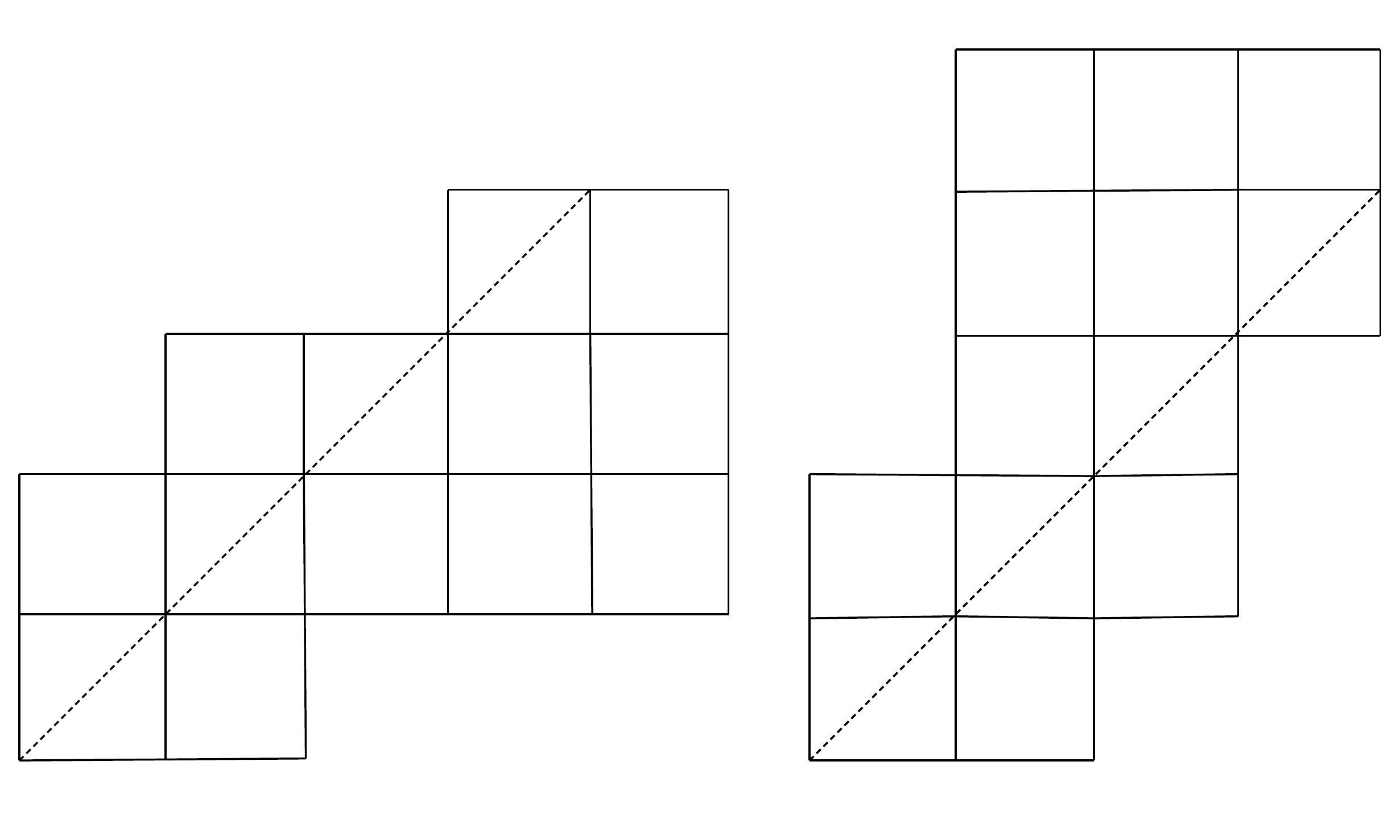}
 \caption{Presentations of an LPM and its dual.}
 \label{fig:duallpm}
\end{figure} 

LPMs are also closed under direct sum. The direct sum in terms of diagrams is illustrated in Figure~\ref{fig:directsumlpm}. In particular, we shall later use the fact (\cite[Theorem 3.6]{Bon-03}) that the LPM $M[U,L]$ is connected if and only if paths $U$ and $L$ meet only at $(0,0)$ and $(m,r)$. 

 \begin{figure}[ht] 
\centering
 \includegraphics[width=.7\textwidth]{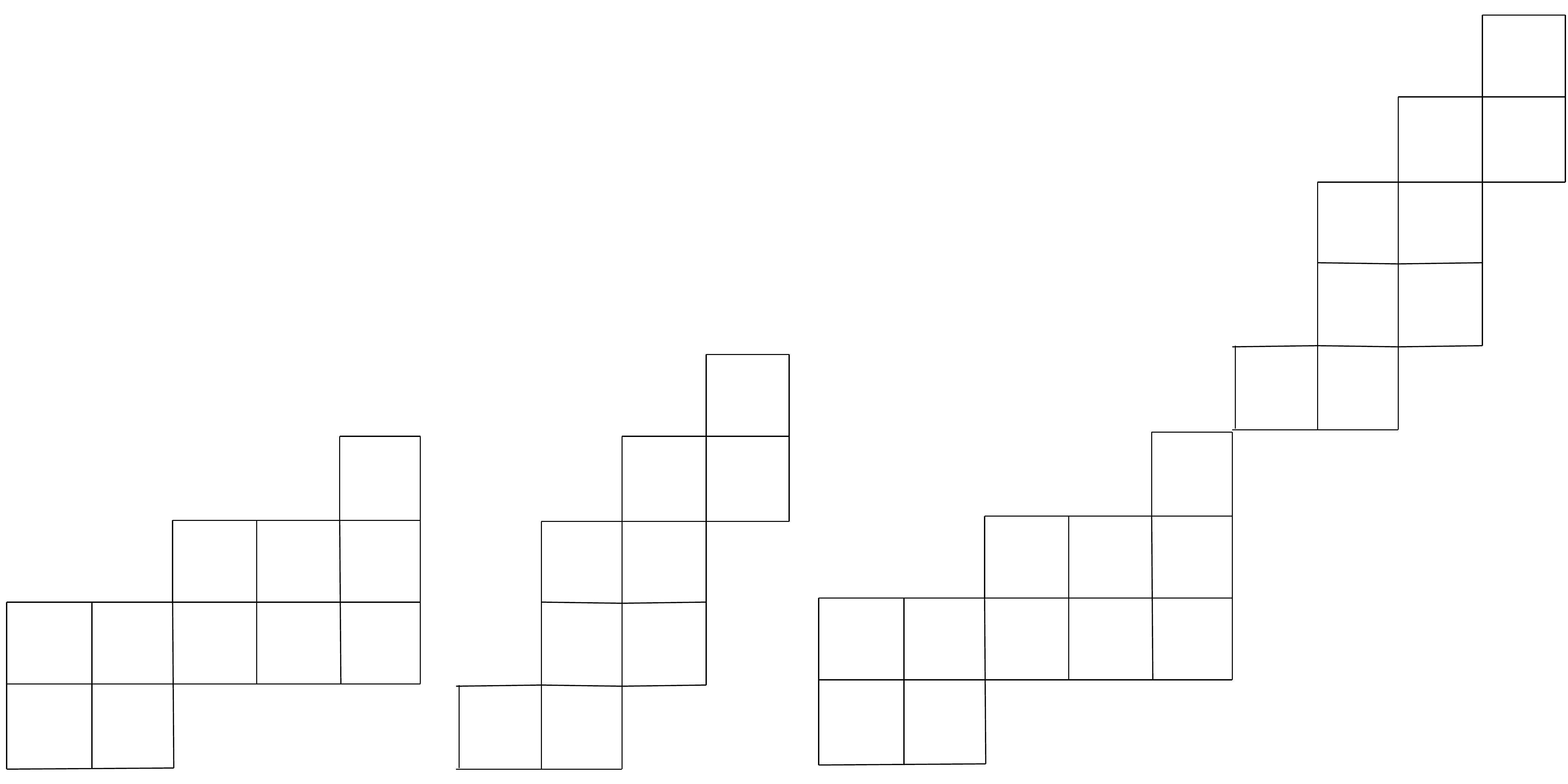}
 \caption{Diagrams of two LPMs and their direct sum.}
 \label{fig:directsumlpm}
\end{figure} 

It is known that if $M=M_1\oplus\dots\oplus M_n$ is the {\em direct sum} of matroids then $P_M=P_{M_1}\times \cdots \times P_{M_n}$ is the {\em Cartesian product} of the corresponding basis polytopes. Since the Ehrhart polynomial of the Cartesian product of two integral polytopes is just the product of their Ehrhart polynomials, it will be enough to work with {\em connected} matroids, that is, matroids that are not the direct sum of two non-trivial matroids. We thus often suppose that $\dim(P_M)=n-1$ where $n$ is the number of elements of $M$.

An LPM is called \emph{snake} if it has at least two elements, it is connected and its diagram has no interior lattice points, see Figure~\ref{fig:snakeslabels}. Note that snakes have also been called {\em border strips} in~\cite{An-17,Bid-12}.

We represent a snake as $S(a_1,a_2,\ldots,a_n)$ if starting from the origin its diagram encloses $a_1\ge 1$ squares to the right, then $a_2\ge 2$ squares up, then $a_3\ge 2$ squares to the right and so on up to $a_n\ge 2$, where the last square counted by each $a_i$ coincides with the first square counted by $a_{i+1}$ for all $i\leq n-1$. 

{An easy  property to check is that Ehrhart polynomial of basis polytopes is invariant under matroid duality. Indeed, if $M^*$ denotes the dual of $M=(E,\bases)$ then $P_{M^*}$ est affinely equivalent to $P_M$ by the isomorphism $x\rightarrow \bar 1-x$ where $\bar 1=(1,\dots ,1)\in\R^E$.
Therefore, unless $M=S(1)=U_{1,2}$, we will suppose that $a_1\ge 2$.}

\begin{figure}[ht] 
\centering
 \includegraphics[width=.65\textwidth]{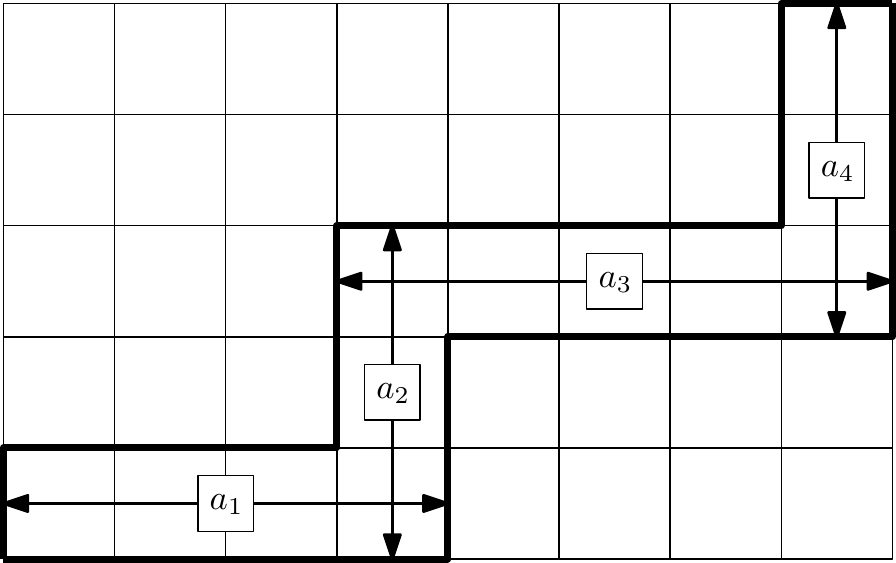}
 \caption{The diagram of the snake $S(a_1,a_2,a_3,a_4)$.}
 \label{fig:snakeslabels}
\end{figure}

The base polytope $P_M$ will be called {\em snake polytope} if $M$ is a snake.

%


\section{Integer points in lattice path matroid polytopes}
\label{sec:basepol}

We shall present a halfspace description of $P_M$ when $M$ is a connected LPM. In order to do this, we give an attractive geometric interpretation of the points in $P_M$ in terms of {polygonal paths}. Let $M=M[U,L]$ be a rank $r$ connected LPM with $r+m$ elements. Let 
$l_i$ be the line defined by $x+y=i$ for each $i=0,\dots ,r+m$ and denote by $R(M[U,L])$ the region bounded by $U$ and $L$. 
Let $T_i=l_i\cap R(M[U,L])$  for each $i=0,\dots ,r+m$, that is, $T_i$ is the segment of $l_i$ contained in $R$.  We notice that the endpoints of $T_i$ are given by the intersection of $l_i$ with $U$ and $L$. Moreover, $T_0=\{(0,0)\}$ and $T_{r+m}=\{(m,r)\}$.
\smallskip

We define a {\em generalized lattice path} $P$
as a polygonal path formed by $r+m$ segments $S_{i+1}(P)$ joining $(x_i,y_i)$ to $(x_{i+1},y_{i+1})$ where $x_i,y_i\in T_i$, $x_i\le x_{i+1}$ and $y_i\le y_{i+1}$ for each $i=0,\dots ,r+m-1$.  Notice that a generalized lattice path is an ordinary lattice path if and only if all its coordinates $(x_i,y_i)$ are integer points.
\smallskip

The points $(x_i,y_i)$ will be called {\em bend points} (points where $P$ may change slope). 
Let $\st(P)=(P_1,\dots ,P_{r+m})$ where $P_{i+1}=y_{i+1}-y_i$ for each $i=0,\dots ,r+m-1$, i.e., $\st(P)$ stores the $y$-steps of the segments $S_i(P)$. 

\begin{examp}\label{ex:gen} We construct the three generalized lattice paths $A,B$ and $C$ in the snake $S(1,2)$ given in
Figure~\ref{fig:genpath}. 

\begin{figure}[ht] 
\centering
 \includegraphics[width=.55\textwidth]{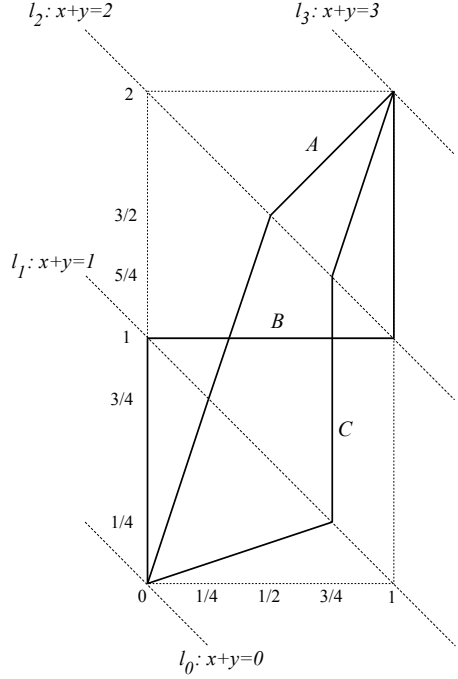}
 \caption{Three generalized lattice paths $A,B$ and $C$ in the snake $S(1,2)$}
 \label{fig:genpath}
\end{figure} 

$A$ is formed by segments : $S_1(A)=\overline{(0,0) (\frac{1}{4},\frac{3}{4})}$, $S_2(A)=\overline{(\frac{1}{4},\frac{3}{4})(\frac{1}{2},\frac{3}{2})}$ and $S_3(A)=\overline{(\frac{1}{2},\frac{3}{2})(1,2)}$, 
\smallskip

$B$ is formed by segments : $S_1(B)=\overline{(0,0) (0,1)}$, $S_2(B)=\overline{(0,1)(1,1)}$ and $S_3(B)=\overline{(1,1)(1,2)}$ ($B$ is an ordinary path corresponding to a base of $S(1,2)$) and
\smallskip

$C$ is formed by segments : $S_1(C)=\overline{(0,0) (\frac{3}{4},\frac{1}{4})}$, $S_2(C)=\overline{(\frac{3}{4},\frac{1}{4})(\frac{3}{4},\frac{5}{4})}$ and $S_3(C)=\overline{(\frac{3}{4},\frac{5}{4})(1,2)}$. 
\smallskip

We have that $\st(A)=(\frac{3}{4}, \frac{3}{4}, \frac{1}{2}), \st(B)=(1,0,1)$ and $\st(C)=(\frac{1}{4}, 1, \frac{3}{4})$.
\end{examp}

\begin{rem}\label{rem:slope}  Let $P$ be a generalized lattice path in the diagram of $M[U,L]$ of rank $r$ and with $r+m$ elements. We have 
\begin{itemize}
 \item[a)] $P$ starts at $(x_0,y_0)=(0,0)$ and ends at $(x_{r+m},y_{r+m})=(m,r)$. 
 \item[b)] $P$ is monotonously increasing since $x_i\le x_{i+1}$ and $y_i\le y_{i+1}$ for each $i=0,\dots ,r+m-1$.
 \item[c)] If $P$ is a lattice path corresponding to a base in $M[U,L]$ then, $P$ is a generalized lattice path where either $x_{i+1}=x_i+1$ and $y_{i+1}=y_i$ or $x_{i+1}=x_i$ and $y_{i+1}=y_i+1$ for each $i=0,\dots ,r+m-1$. We thus have that $\st(P)\in\{0,1\}^{r+m}$ and the notion of step vector for generalized paths generalizes step vectors of ordinary lattice paths.
 \item[d)] {Since $\mathrm{dist}(l_i,l_{i+1})=\frac{1}{\sqrt 2}$ ($l_i$ parallel to the one unit translated line $l_{i+1}$ )} and $x_{i+1}-x_i\ge 0$ (resp. $y_{i+1}-y_i\ge 0$) then $1\ge y_{i+1}-y_i$ (resp. $1\ge x_{i+1}-x_i$).
 \item[e)] Let $\st(P)=(P_1,\dots ,P_{r+m})$. Then, by definition, we clearly have that 
$$\sum_{j=1}^i L_j\leq \sum_{j=1}^i P_j\leq \sum_{j=1}^i U_j \text{ for all } i\in[r+m].$$
In particular, $\sum\limits_{i=1}^{r+m}P_i=r$.
\item[f)] { {If $M[U,L]=M_1\oplus M_2$ is disconnected then its diagram can be obtained from the diagrams of $M_1$ and $M_2$ by identifying the top-right point of $M_1$ with the bottom-left point of $M_2$.} The generalized lattice paths from $M[U,L]$ are the concatenations of genealized lattice paths from $M_1$ and $M_2$.}
\end{itemize}
\end{rem}


Let $\mathcal{C}_M$ be the family of step vectors of all the generalized lattice paths in $M[U,L]$.

\begin{thm}\label{thm:polypaths}
 Let $M=M[U,L]$ be a rank $r$ LPM with $r+m$ elements and let $\st(L)=(L_1,\dots ,L_{r+m})$ and $\st(U)=(U_1,\dots ,U_{r+m})$. Then, $\mathcal{C}_M$ equals
 $$\left\{p\in\mathbb{R}^{r+m}\mid 0\leq p_i\leq 1 \text{ and } \sum_{j=1}^i L_j\leq \sum_{j=1}^i p_j\leq \sum_{j=1}^i U_j \text{ for all } i\in[r+m] \right\}.$$
 
\end{thm}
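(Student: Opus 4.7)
The plan is to prove the two inclusions by unwinding the definitions together with Remark~\ref{rem:slope}.

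For $\mathcal{C}_M\subseteq$ RHS, I would take a generalized lattice path $P$ with bend points $(x_0,y_0),\dots,(x_{r+m},y_{r+m})$ and step vector $\st(P)=(P_1,\dots,P_{r+m})$. Since $P_{i+1}=y_{i+1}-y_i$ and $(x_0,y_0)=(0,0)$, telescoping gives $y_i=\sum_{j=1}^i P_j$ for every $i$. The bounds $0\le P_i\le 1$ are then exactly Remark~\ref{rem:slope}(d). Since $U$ is a lattice path, after its first $i$ steps it sits at the point $\bigl(i-\sum_{j=1}^i U_j,\sum_{j=1}^i U_j\bigr)$, which lies on $l_i$; the analogous statement holds for $L$. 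Consequently $T_i=l_i\cap R(M[U,L])$ is exactly the set of points $(x,y)$ with $x+y=i$ and $\sum_{j=1}^i L_j\le y\le \sum_{j=1}^i U_j$, and the membership $(x_i,y_i)\in T_i$ yields the required partial-sum bounds.

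For the reverse inclusion, given $p$ in the RHS I would construct a generalized lattice path directly. Set $y_i:=\sum_{j=1}^i p_j$ and $x_i:=i-y_i$, with $(x_0,y_0)=(0,0)$. Then $x_i+y_i=i$, so each $(x_i,y_i)$ lies on $l_i$, and the partial-sum hypotheses place it on the segment $T_i$ by the description above. Monotonicity $y_{i+1}-y_i=p_{i+1}\ge 0$ and $x_{i+1}-x_i=1-p_{i+1}\ge 0$ both follow from $0\le p_{i+1}\le 1$, so joining the $(x_i,y_i)$ by straight segments produces a generalized lattice path $P$ whose step vector is exactly $p$.

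I do not anticipate any serious obstacle: the whole statement is the translation of the constraint $(x_i,y_i)\in T_i$ into inequalities on the partial sums of $\st(P)$. The one step to verify carefully is that the $i$-th vertex of $U$ (respectively $L$) has $y$-coordinate $\sum_{j=1}^i U_j$ (respectively $\sum_{j=1}^i L_j$), which is immediate from the definition of the step vector. If $M[U,L]$ is disconnected, the segments $T_i$ collapse to a point exactly at the indices $i$ where $\sum_{j=1}^i L_j=\sum_{j=1}^i U_j$, so both sides behave consistently with the concatenation of Remark~\ref{rem:slope}(f) and the argument goes through unchanged.
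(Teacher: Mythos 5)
Your proposal is correct and follows essentially the same route as the paper's own proof: the forward inclusion is Remark~\ref{rem:slope}(d) and (e), and the reverse inclusion uses the identical construction $(x_i,y_i)=(i-\sum_{j=1}^i p_j,\sum_{j=1}^i p_j)$ and checks membership in $T_i$ via $x_i+y_i=i$ and the partial-sum bounds. Your explicit identification of $T_i$ as $\{(x,y): x+y=i,\ \sum_{j=1}^i L_j\le y\le \sum_{j=1}^i U_j\}$ is a slightly more careful spelling-out of a step the paper leaves implicit, but the argument is the same.
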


\begin{proof} Let $\st(P)=(P_1,\dots, P_{r+m})\in \mathcal{C}_M$. By definition $P_{i+1}=y_{i+1}-y_i$ for each $i=0,\dots ,r+m-1$. Thus, $\st(P)$ satisfies the first set of inequalities by Remark~\ref{rem:slope}~(d). By Remark~\ref{rem:slope}~(e) we conclude that $\st(P)\in\R^{r+m}$ satisfies the remaining inequalities.
\smallskip

Let conversely $p\in\mathbb{R}^{r+m}$ such that $0\leq p_i\leq 1$ and $\sum\limits_{j=1}^i L_j\leq \sum\limits_{j=1}^i p_j\leq \sum\limits_{j=1}^i U_j $ for all  $i\in[r+m]$. We consider the points 
$$(x_0,y_0)=(0,0)  \text{ and } (x_i,y_i)=(i-\sum\limits_{j=1}^i p_j,\sum\limits_{j=1}^i p_j) \text{ for each } i=1,\dots ,r+m.$$ 

We clearly have that $y_{i+1}\ge y_i$ for all $i$ since $p_i\ge 0$. Moreover, 
$$x_{i+1}-x_i=(i+1)-\sum\limits_{j=1}^{i+1} p_j - (i-\sum\limits_{j=1}^i p_j)=1-p_{i+1}$$
but $1-p_{i+1}\ge 0$ since $p_i\le 1$.

Now, $(x_i,y_i)\in T_i$,  indeed, $x_i+y_i=i-\sum\limits_{j=1}^i p_j+\sum\limits_{j=1}^i p_j=i$ and thus $(x_i,y_i)$ belongs to line $l_i$. Moreover, $(x_i,y_i)$ belongs to $R(M[U,L])$ since $\sum\limits_{j=1}^i L_j\leq \sum\limits_{j=1}^i p_j\leq \sum\limits_{j=1}^i U_j $ and thus $\sum\limits_{j=1}^i (1-L_j)\geq \sum\limits_{j=1}^i (1-p_j)\geq \sum\limits_{j=1}^i (1-U_j)$.
Therefore, the points $(x_i,y_i)$ form the generalized lattice path $C$ with $\st(C)=(p_1,\dots ,p_{r+m})$.
\smallskip

\end{proof}

\begin{thm}\label{thm:polypaths1}
 Let $M=M[U,L]$ be a rank $r$ LPM on $r+m$ elements. Then, $P_M=\mathcal{C}_M$.
\end{thm}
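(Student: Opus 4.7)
The plan is to prove the two inclusions $P_M\subseteq\mathcal{C}_M$ and $\mathcal{C}_M\subseteq P_M$ using the halfspace description of $\mathcal{C}_M$ given in Theorem~\ref{thm:polypaths}. For the forward direction, I would observe that any base $B$ of $M=M[U,L]$ is a lattice path from $(0,0)$ to $(m,r)$ staying between $L$ and $U$, so its incidence vector $\st(B)\in\{0,1\}^{r+m}$ satisfies every inequality in the description of $\mathcal{C}_M$; hence $\st(B)\in\mathcal{C}_M$. Since $\mathcal{C}_M$ is cut out by linear inequalities and therefore convex, and $P_M$ is by definition the convex hull of the $\st(B)$, one concludes $P_M\subseteq\mathcal{C}_M$.

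The less immediate direction is $\mathcal{C}_M\subseteq P_M$. My plan is to show that $\mathcal{C}_M$ is an integer polytope and then check that its integer points coincide with the incidence vectors of bases of $M$. Writing the constraints from Theorem~\ref{thm:polypaths} as $\mathcal{C}_M=\{p\in\mathbb{R}^{r+m}:Ap\le b\}$, the matrix $A$ consists of the box rows $\pm e_i^{\top}$ together with the ``prefix'' rows $\pm(1,\ldots,1,0,\ldots,0)$, while $b$ has integer entries (the values $0$, $1$ and the partial sums of $L$ and $U$). Thus $A$ is an interval matrix with consecutive nonzero entries in each row, a class that is classically known to be totally unimodular (via the consecutive-ones property, or Ghouila-Houri's criterion). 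By the Hoffman--Kruskal theorem this forces every vertex of $\mathcal{C}_M$ to be integer.

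Once integrality is established, any integer point $p\in\mathcal{C}_M$ necessarily lies in $\{0,1\}^{r+m}$ (from the box constraints) and has partial sums sandwiched between those of $L$ and $U$, which is precisely the condition that $p=\st(Q)$ for a lattice path $Q$ from $(0,0)$ to $(m,r)$ that stays in the region $R(M[U,L])$, i.e.\ that $p$ is the incidence vector of a base of $M$. Hence every vertex of $\mathcal{C}_M$ belongs to $P_M$, giving $\mathcal{C}_M=\mathrm{conv}(\mathrm{vert}(\mathcal{C}_M))\subseteq P_M$ and completing the proof.

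The only mildly delicate step is the total unimodularity of $A$, but this is a standard fact about interval (consecutive-ones) matrices and should require at most a citation. An alternative, slightly more self-contained approach would be to exhibit, for any $p\in\mathcal{C}_M$, an explicit decomposition of $p$ as a convex combination of base incidence vectors --- for instance, by iteratively peeling off a base $B$ with $\st(B)\le p$ component-wise whenever feasible and using induction on $\sum_i p_i$ --- but the totally unimodular route is considerably shorter and is the one I would pursue.
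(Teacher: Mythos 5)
Your proof is correct, but the harder inclusion $\mathcal{C}_M\subseteq P_M$ is handled by a genuinely different argument than the paper's. The paper proceeds by induction on the number of elements: it reduces the disconnected case to Cartesian products, and in the connected case it shows that every point on the boundary of $\mathcal{C}_M$ (relative to the hyperplane $\sum_j p_j=r$) lies in $P_M$ by identifying which inequality is tight and passing to a deletion $M\setminus i$, a contraction, or a disconnected LPM $M'$ with $P_{M'}\subseteq P_M$ obtained by pinching $U$ down to $L$ at the point of tightness. Your route instead observes that the constraint matrix of Theorem~\ref{thm:polypaths} has rows that are $\pm$ indicator vectors of intervals ($\{i\}$ or $\{1,\dots,i\}$), hence is totally unimodular, so by Hoffman--Kruskal every vertex of $\mathcal{C}_M$ is integral; the box and prefix constraints (including the forced equality $\sum_j p_j=r$ at $i=r+m$) then identify each integral point with the step vector of a lattice path between $L$ and $U$, i.e.\ a base. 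Both arguments are sound. Yours is shorter, treats connected and disconnected $M$ uniformly, and yields integrality of $\mathcal{C}_M$ (indeed of all its faces) as a free by-product, at the price of invoking two standard external facts (TU-ness of interval matrices, preserved under negating and duplicating rows, and the Hoffman--Kruskal theorem). The paper's argument is more self-contained and more combinatorial: the case analysis of which facet a boundary point lies on is exactly the deletion/contraction/direct-sum structure of LPMs, which foreshadows the facial and lattice-theoretic analysis carried out later in the paper.
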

\begin{proof} We first prove that $P_M\subseteq\mathcal{C}_M$. By Remark~\ref{rem:slope} any base of $M$ corresponds to a generalized lattice path. Therefore, by Theorem~\ref{thm:polypaths}, any vertex of $P_M$ belongs to $\mathcal{C}_M$ and since $\mathcal{C}_M$ is convex (it has a halfspace intersection description) then $P_M\subseteq\mathcal{C}_M$. 
 \smallskip

We now prove that $P_M\supseteq\mathcal{C}_M$. We show that every element in $\mathcal{C}_M$ is a convex combination of step vectors corresponding to ordinary lattice paths. We proceed by induction on the number $n$ of elements of $M$. If $n=1$, then there are only ordinary paths, so we are done. Now suppose $n>1$. If $M=M_1\oplus M_2$ is disconnected, we have $P_{M}=P_{M_1}\times P_{M_2}$, {see also Remark~\ref{remextdiscon}}, and by induction $P_{M_1}\supseteq \mathcal{C}_{M_1}$ and $P_{M_2}\supseteq \mathcal{C}_{M_2}$. This gives $P_{M}\supseteq\mathcal{C}_{M_1}\times \mathcal{C}_{M_2}$ where the latter consists of step vectors of concatenated generalized lattice paths in $M_1$ and $M_2$. {Thus, by Remark~\ref{rem:slope}~f), we may obtain}, $\mathcal{C}_{M_1}\times \mathcal{C}_{M_2}=\mathcal{C}_{M}$

Suppose now that $M$ is connected.
Note that $\mathcal{C}_M$ is contained in the $(n-1)$-dimensional subspace $H$ defined by the equality 
$$\sum\limits_{j=1}^{r+m} L_j=\sum\limits_{j=1}^{r+m} p_j=\sum\limits_{j=1}^{r+m} U_j=r.$$ Let $C$ be a point on the boundary of $\mathcal{C}_M$ with respect to $H$. Hence, $C$ satisfies the equality in one of the inequalities of the halfspace description of $\mathcal{C}_M$ of Theorem~\ref{thm:polypaths}. If $C_i=0$ for some $i\in[n]$, then we can consider $M\setminus i$, which corresponds to all lattice paths of $M$ with a $0$ in the $i^{th}$ coordinate. By induction $C$ is in the convex hull of these vectors. The dual argument works if $C_i=1$. If $\sum_{j=1}^i L_j= \sum_{j=1}^i C_j$, we know that $\sum_{j=1}^i C_j<\sum_{j=1}^i U_j$ since the case of both equalities cannot happen in a connected $M$.
Thus, $C$ coincides with $L$ at a point $(x,y)\in\mathbb{Z}^2$ which is not in $U$. We consider the lattice path matroid $M'$ with lower path $L$ and upper path $U'$, where $U'$ arises from $U$ by going right from the point $(x',y)\in U$ until reaching $(x,y)$ and then up until reaching the point $(x,y')\in U$. Now, $M'$ contains $C$ and is not connected. By applying the above argument for disconnected LPMs we get $C\in P_{M'}$, but clearly we have $P_{M'}\subseteq P_M$. If $C$ coincides with $U$ at a point which is not in $L$ the analogous argument works. We thus have shown that all points on the boundary of $\mathcal{C}_M$ are in $P_M$, that is,  $P_M\supseteq\mathcal{C}_M$.
\end{proof}

\begin{rem}\label{rem:nexdesc} The above geometric description of the points in $P_M$ seems to be new as far as we are aware. The equality 
$$P_M=\left\{p\in\mathbb{R}^{n}\mid \forall i\in[n]: 0\leq p_i\leq 1; \sum_{j=1}^i L_j\leq \sum_{j=1}^i p_j\leq \sum_{j=1}^i U_j\right\}$$
 was already stated in~\cite[Lemma 3.8]{Bid-12} but it seems that the given proof contains a wrong argument. Indeed, in the proof a vector $B:=(a-a_iX)/(1-a_i)$ is defined, where $a$ is a vector satisfying the inequalities describing $\mathcal{C}_M$ without any coordinate in $\{0,1\}$, $a_i$ a smallest entry of $a$, and $X_i$ a $0,1$-step vector of an ordinary lattice path such that $X_i=1$. It is claimed that $B$ also satisfies the inequalities of $\mathcal{C}_M$, in particular verifying that $0\le B_i\le 1$ for all $i$. However, if $a_j=1-\frac{a_i}{2}$ and $X_j=0$, then $B_j=a_j/(1-a_i)=(1-\frac{a_i}{2})/(1-a_i)>1$.
\end{rem}

Let $\mathcal{C}^k_M$ be the family of step vectors of all the generalized lattice paths $P$ of $M[U,L]$ such that all the bend points $(x,y)$ of $P$ satisfy $kx,ky\in\mathbb{Z}$. The following two corollaries can be easily deduced from Theorems~\ref{thm:polypaths} and~\ref{thm:polypaths1}.

\begin{cor}\label{cor:integ}
Let $M$ be an LPM on $n$ elements and let $k\in\mathbb{N}$. Then, a point $p\in\mathbb{R}^n$ is in $kP_M\cap\mathbb{Z}^n$ if and only if $p$ corresponds to a generalized lattice path in  
$\mathcal{C}^k_M$.
\end{cor}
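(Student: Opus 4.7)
My plan is to derive the corollary directly from Theorem~\ref{thm:polypaths1} by homogeneity, combined with an elementary bookkeeping argument about partial sums; I do not expect a serious obstacle.

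First, I would use Theorem~\ref{thm:polypaths1} to identify $P_M$ with the set $\mathcal{C}_M$ of step vectors of generalized lattice paths in $M[U,L]$. Scaling by $k$, a point $p\in\mathbb{R}^n$ lies in $kP_M$ if and only if $q:=p/k\in\mathcal{C}_M$, i.e., $q=\st(P)$ for some generalized lattice path $P$ of $M[U,L]$. By the construction used in the proof of Theorem~\ref{thm:polypaths}, the bend points of such a $P$ are
\[
(x_i,y_i)=\Bigl(i-\sum_{j=1}^{i} q_j,\ \sum_{j=1}^{i} q_j\Bigr),\quad i=0,\dots,r+m.
\]

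Next I would unwind the integrality condition. Multiplying through by $k$ gives the identities
\[
ky_i=\sum_{j=1}^{i} p_j\qquad\text{and}\qquad kx_i=ki-\sum_{j=1}^{i} p_j.
\]
Thus, if $p\in\mathbb{Z}^n$, then every partial sum $\sum_{j\le i} p_j$ is an integer, so $kx_i,ky_i\in\mathbb{Z}$ for each $i$; this is exactly the condition $\st(P)\in\mathcal{C}^k_M$, and we have exhibited the generalized lattice path corresponding to $p$. Conversely, if all bend points of $P$ are $k$-integral, then each coordinate $p_i=kq_i=ky_i-ky_{i-1}$ is a difference of integers, giving $p\in\mathbb{Z}^n$.

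These two implications together yield the claimed equivalence. The only nontrivial ingredient is Theorem~\ref{thm:polypaths1}; everything else is the telescoping identity linking partial sums of a step vector to the $y$-coordinates of the corresponding bend points, which shows that integrality of $p$ and $k$-integrality of all bend points of the associated path are equivalent.
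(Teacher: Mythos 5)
Your proposal is correct and matches the intended argument: the paper states that this corollary is ``easily deduced'' from Theorems~\ref{thm:polypaths} and~\ref{thm:polypaths1} without writing out details, and the deduction is exactly your scaling step $p\in kP_M \Leftrightarrow p/k\in\mathcal{C}_M$ together with the observation that the bend points $(x_i,y_i)=(i-\sum_{j\le i}q_j,\ \sum_{j\le i}q_j)$ are determined by the step vector, so $k$-integrality of all bend points is equivalent to integrality of all partial sums of $p$, hence (by telescoping) to $p\in\mathbb{Z}^n$.
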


\begin{cor}\label{cor:inter}

 Let $M=M[U,L]$ be a rank $r$ LPM on $r+m$ elements. Then, a point $p\in\mathbb{R}^{r+m}$ is in the interior of $P_M$ if and only if $p$ corresponds to a generalized lattice path $P$ of $M$ such that
 \begin{itemize}    
\item $P\cap U=P\cap L=L\cap U$,
\item $P$ is strictly monotone, that is, $x_i< x_{i+1}$ and $y_i< y_{i+1}$ for all $i$.
\end{itemize}
\end{cor}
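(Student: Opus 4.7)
The plan is to derive the corollary directly from the halfspace description of $P_M$ established in Theorems~\ref{thm:polypaths} and~\ref{thm:polypaths1}, by translating each strict defining inequality into a geometric condition on the associated generalized lattice path.

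First I would observe that the relative interior of $P_M$ is precisely the subset on which every halfspace inequality of Theorem~\ref{thm:polypaths} that is not a forced equality is satisfied strictly. A partial-sum inequality $\sum_{j\le i}L_j\le\sum_{j\le i}p_j\le\sum_{j\le i}U_j$ is a forced equality exactly when $L$ meets $U$ on $l_i$; at such an $i$ every $p\in P_M$ already satisfies $\sum_{j\le i}p_j=\sum_{j\le i}L_j=\sum_{j\le i}U_j$. Hence the relative interior consists of the points with $0<p_i<1$ for every $i\in[r+m]$ and with the partial-sum inequalities strict at every $i$ for which $L\cap l_i\ne U\cap l_i$.

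Next I would translate these strict inequalities using the dictionary $p_i=y_i-y_{i-1}$ and $1-p_i=x_i-x_{i-1}$ from the proof of Theorem~\ref{thm:polypaths}. The condition $0<p_i<1$ is exactly $y_{i-1}<y_i$ and $x_{i-1}<x_i$, i.e. strict monotonicity of $P$. Since $\sum_{j\le i}p_j=y_i$, while $\sum_{j\le i}L_j$ and $\sum_{j\le i}U_j$ are the $y$-coordinates of $L\cap l_i$ and $U\cap l_i$, the strict partial-sum inequalities say exactly that every bend point $(x_i,y_i)$ lies strictly between $L$ and $U$ on its line $l_i$, except at the indices where $L$ and $U$ meet (where equality is forced and the bend point automatically lies on $L\cap U$).

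The only step that is not immediate is upgrading this bend-point condition to the claim $P\cap L=P\cap U=L\cap U$ about the whole paths. For this I would consider the continuous piecewise linear gap $f(s)=y_P(s)-y_L(s)$ on $[0,r+m]$, where $y_P(s)$ and $y_L(s)$ are the $y$-coordinates of the unique points $P\cap l_s$ and $L\cap l_s$. Between two consecutive lines $l_i$ and $l_{i+1}$ each of $P$ and $L$ consists of a single segment, so $f$ is linear on $[i,i+1]$; thus $f$ vanishes at an interior point of such an interval only if it also vanishes at one of its endpoints. Combined with the trivial inclusions $L\cap U\subseteq P\cap L$ and $L\cap U\subseteq P\cap U$ (any path in $P_M$ must pass through every meeting point of $L$ and $U$), this yields $P\cap L=L\cap U$; the same argument applied to $y_U(s)-y_P(s)$ gives $P\cap U=L\cap U$. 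I expect this propagation from bend points to the entire path to be the main obstacle; the rest is bookkeeping against Theorem~\ref{thm:polypaths}.
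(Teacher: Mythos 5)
Your argument is correct and is exactly the deduction the paper intends: the paper gives no proof of this corollary beyond the remark that it ``can be easily deduced'' from Theorems~\ref{thm:polypaths} and~\ref{thm:polypaths1}, and your identification of the relative interior via the strict, non-implicit inequalities of the halfspace description, together with the linearity argument propagating the bend-point condition from the lines $l_i$ to the whole path, fills in that deduction correctly. (As in the paper, ``interior'' must be read as relative interior, and one tacitly assumes $M$ has no loops or coloops so that no constraint $0\le p_i\le 1$ is an implicit equality.)
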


\subsection{Application: a formula for $L_{P_{S(a,b)}}(t)$}
In~\cite{Loer-09}, $L_{P_M}(t)$ is explicitly calculated  for $28$ selected matroids $M$ and in~\cite{Katz-05}
for all uniform matroids. The following result provides an explicit formula for $L_{P_{S(a,b)}}(t)$.

\begin{thm}\label{thm:ehrhart} Let $a,b\ge 2$ be integers. Then, the Ehrhart polynomial of $P_{S(a,b)}$ is given by
	\[
		1+\frac{1}{(a-1)!(b-1)!} \sum_{i=1}^{a+b-1} \left(\sum_{j=i-1}^{a+b-2} (-1)^{j-i+1} \cdot\frac{B_{j-i+1}\cdot\sigma_{a+b-2-j}}{j+1} \binom{j+1}{i}\right) t^i.
	\]
	
\noindent where $\sigma_\ell$ is the $\ell^{th}$ symmetric function on the numbers $$1,2,\ldots,a-1,1,2,\ldots,b-1$$ and $B_m$ is the $m^{th}$ Bernoulli number.
\end{thm}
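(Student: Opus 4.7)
The plan is to use Corollary~\ref{cor:integ} to realize $L_{P_{S(a,b)}}(t)$ as a count of generalized lattice paths in $S(a,b)$ whose bend points lie on the $\tfrac{1}{t}\mathbb{Z}^2$-grid, and then exploit the rigid geometry of the L-shape. The diagram of $S(a,b)$ consists of a horizontal arm of $a$ unit squares and a vertical arm of $b$ unit squares sharing one corner; consequently each antidiagonal segment $T_i$ for $1 \leq i \leq a+b-1$ has length $\sqrt{2}$, so the admissible bend points on $T_i$ have $y$-coordinate in $\{y_{L,i}, y_{L,i}+\tfrac{1}{t}, \ldots, y_{L,i}+1\}$. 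Writing $u_i = y_i - y_{L,i}$, the monotonicity conditions from Remark~\ref{rem:slope} translate into
\[
 u_1 \leq u_2 \leq \cdots \leq u_a \geq u_{a+1} \geq \cdots \geq u_{a+b-1},
\]
a ``tent'' whose apex is $u_a$. Conditioning on $v = tu_a \in \{0,1,\ldots,t\}$, the left chain contributes $\binom{v+a-1}{a-1}$ weakly increasing sequences in $\{0,\ldots,v\}$ and the right chain contributes $\binom{v+b-1}{b-1}$ in the same way, yielding
\[
 L_{P_{S(a,b)}}(t) = \sum_{v=0}^{t} \binom{v+a-1}{a-1}\binom{v+b-1}{b-1}.
\]

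To pass to the claimed closed form, I would expand the summand as a polynomial in $v$. Since $\prod_{s=1}^{a-1}(v+s)\prod_{s=1}^{b-1}(v+s) = \sum_{\ell=0}^{a+b-2} \sigma_\ell\, v^{a+b-2-\ell}$, reindexing by $j=a+b-2-\ell$ gives
\[
 L_{P_{S(a,b)}}(t) = \frac{1}{(a-1)!(b-1)!} \sum_{j=0}^{a+b-2} \sigma_{a+b-2-j} \sum_{v=0}^{t} v^j.
\]
Faulhaber's formula $\sum_{v=0}^{t} v^j = \tfrac{1}{j+1}\sum_{s=0}^{j}\binom{j+1}{s} B_s (t+1)^{j+1-s}$, together with the binomial expansion of $(t+1)^{j+1-s}$, extracts the coefficient of $t^i$ (for $i \geq 1$) as
\[
 \frac{1}{(a-1)!(b-1)!} \sum_{j=i-1}^{a+b-2} \frac{\sigma_{a+b-2-j}}{j+1}\sum_{s=0}^{j+1-i} \binom{j+1}{s}\binom{j+1-s}{i} B_s.
\]

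The main step, and the place where I expect the real obstacle to lie, is the simplification of this double sum into a single one. Using the trinomial rearrangement $\binom{j+1}{s}\binom{j+1-s}{i} = \binom{j+1}{i}\binom{j+1-i}{s}$, the factor $\binom{j+1}{i}$ pulls out and the inner sum collapses to $\sum_{s=0}^{j+1-i}\binom{j+1-i}{s} B_s$. The classical Bernoulli identity $\sum_{s=0}^{n}\binom{n}{s}B_s = (-1)^n B_n$ — a direct consequence of the defining recurrence $\sum_{s=0}^{n-1}\binom{n}{s}B_s = 0$ for $n \geq 2$, with the $n=1$ case corrected by $B_1 = -\tfrac{1}{2}$ — then evaluates this to $(-1)^{j-i+1}B_{j-i+1}$, producing exactly the coefficient appearing in the statement. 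A similar but degenerate evaluation at $i=0$, where the range of $s$ falls one short of the identity, contributes only through the $j=0$ term and gives $[t^0]L = \sigma_{a+b-2}/((a-1)!(b-1)!) = 1$, which accounts for the leading ``$1+$''.
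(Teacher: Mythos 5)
Your proof is correct, and it reaches the paper's key intermediate identity $L_{P_{S(a,b)}}(t)=\sum_{v=0}^{t}\binom{v+a-1}{a-1}\binom{v+b-1}{b-1}$ by a genuinely more direct route. The paper derives this sum from the general matrix formula of Theorem~\ref{thm:count} (built on the recursion of Lemma~\ref{lem:recursion}), specializing $u^TA(k,b)RA(k,a)u$ and applying the hockey-stick identity twice; you instead read the count straight off the geometry of Corollary~\ref{cor:integ}: since the diagram of $S(a,b)$ has no interior lattice points, each antidiagonal $T_i$ carries a single parameter $u_i\in\{0,\tfrac1t,\dots,1\}$, monotonicity becomes the tent condition $u_1\le\dots\le u_a\ge\dots\ge u_{a+b-1}$, and conditioning on the apex gives the product of two multiset coefficients. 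Your approach is more elementary and self-contained for $S(a,b)$, at the cost of not yielding the general-snake formula of Theorem~\ref{thm:count}; conversely, you carry out in full the final step that the paper only sketches (``standard formulas for the sums of first powers''), namely Faulhaber's formula, the trinomial revision $\binom{j+1}{s}\binom{j+1-s}{i}=\binom{j+1}{i}\binom{j+1-i}{s}$, and the Bernoulli identity $\sum_{s=0}^{n}\binom{n}{s}B_s=(-1)^nB_n$, including the correct degenerate treatment of the constant term $\sigma_{a+b-2}/((a-1)!(b-1)!)=1$. Both arguments are sound; yours is arguably the more transparent proof of this particular theorem.
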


In order to prove this, we will state a couple of general results for counting the number of integer points in $kP_{S(a_1,\ldots,a_n)}$. For this purpose, we will use the description given by Corollary~\ref{cor:inter}, so we shall focus on counting the number of generalized lattice paths whose coordinates are integer multiples of $\frac{1}{k}$. Later on we will see how these results simplify in the case $n=2$ and yield the formula in Theorem~\ref{thm:ehrhart}.

We begin with the following simple combinatorial lemma. We refer the reader to Figure~\ref{fig:excount} for a geometric interpretation that relates the lemma to generalized lattice paths.

\begin{figure}[htbp]
	\centering
		\includegraphics[width=0.80\textwidth]{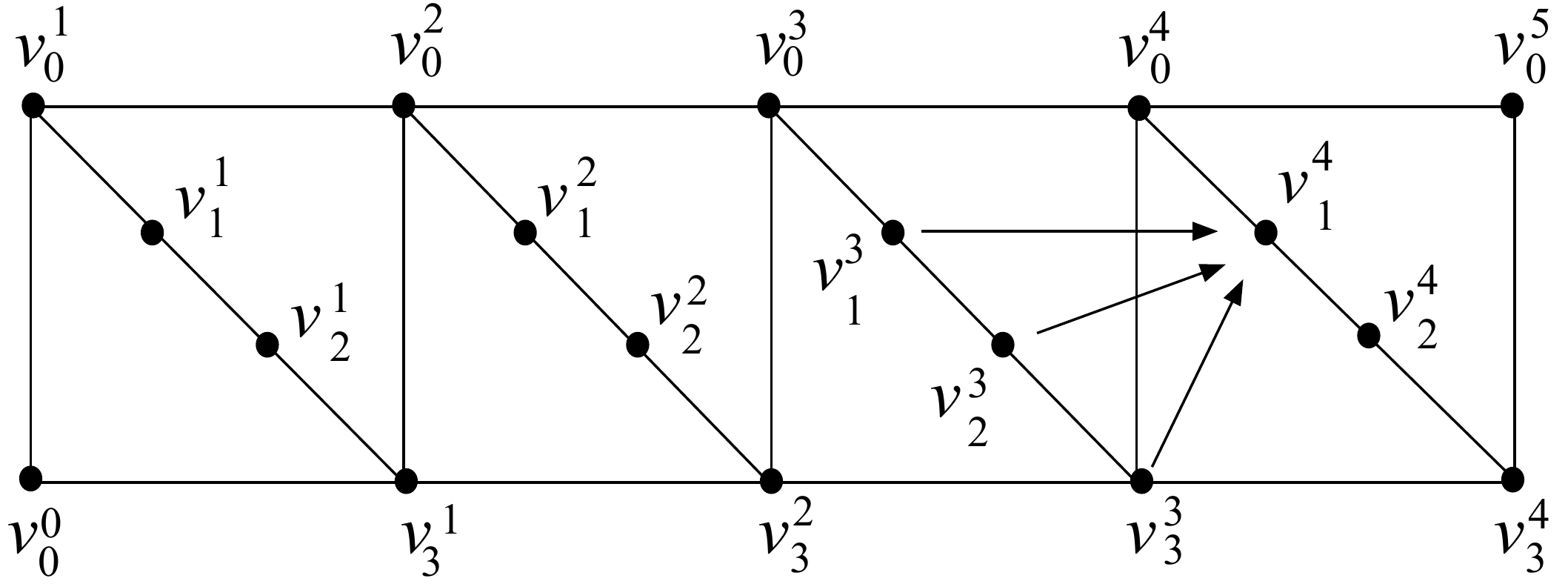}
 \caption{General construction.}\label{fig:excount}
\end{figure}

\begin{lem}\label{lem:recursion}
	Let $k$ and $a$ be positive integers. Let $u^1=(u_0^1,\ldots,u_k^1)$ be a vector of positive integers. For $i= 1,\ldots,a-1$ define recursively the entries of vectors $u^i$ as follows:
	
	\[
		u_j^{i+1}=\sum_{\ell=j}^k u_\ell^{i}, \quad \text{ for } j\in\{0,1,\ldots,k\}.
	\]
	
	Then $u^a=A(k,a)u^1$ where $A(k,a)$ is the $(k+1)\times (k+1)$ matrix with entries $A_{ij}$ given by:
	
	\[
		A_{ij}=\binom{a-2+j-i}{j-i}.
	\]
	
\end{lem}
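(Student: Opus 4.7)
The plan is to prove the formula by induction on $a$, using the hockey-stick (parallel summation) identity as the one non-trivial combinatorial input. Before starting the induction I would remark on the convention that $\binom{a-2+j-i}{j-i}=0$ whenever $j<i$, which makes $A(k,a)$ upper triangular and is consistent with the recursion, since $u^{i+1}_j$ is an upwards partial sum of $u^i$.

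For the base case $a=1$, one reads off $A(k,1)_{ij}=\binom{-1+j-i}{j-i}$, which equals $1$ for $j=i$ (via $\binom{-1}{0}=1$) and $0$ otherwise, so $A(k,1)=I_{k+1}$ and $u^1=A(k,1)u^1$ trivially. For $a=2$, as a sanity check, $A(k,2)_{ij}=\binom{j-i}{j-i}$, which is the upper-triangular all-ones matrix, matching the defining recursion $u^2_j=\sum_{\ell\ge j}u^1_\ell$.

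For the inductive step, assume $u^a_i=\sum_{j=i}^k\binom{a-2+j-i}{j-i}\,u^1_j$. Applying the recursion and interchanging the order of summation gives
\[
u^{a+1}_i \;=\; \sum_{\ell=i}^k u^a_\ell \;=\; \sum_{\ell=i}^k\sum_{j=\ell}^k \binom{a-2+j-\ell}{j-\ell}u^1_j \;=\; \sum_{j=i}^k u^1_j \sum_{\ell=i}^{j}\binom{a-2+j-\ell}{j-\ell}.
\]
Substituting $t=j-\ell$ turns the inner sum into $\sum_{t=0}^{j-i}\binom{a-2+t}{t}$, and the hockey-stick identity $\sum_{t=0}^{n}\binom{r+t}{t}=\binom{r+n+1}{n}$ (with $r=a-2$, $n=j-i$) yields $\binom{a-1+j-i}{j-i}$. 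This is exactly $A(k,a+1)_{ij}$, completing the induction.

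The only real obstacle is to correctly handle the index bookkeeping and the negative-argument binomial coefficients; once the upper-triangular structure is fixed and the dummy variable $t=j-\ell$ is introduced, the hockey-stick identity finishes the job with no further calculation. I would probably also mention the combinatorial meaning, namely that $\binom{a-2+j-i}{j-i}$ counts weak compositions, which matches the intended lattice-path interpretation suggested by Figure~\ref{fig:excount}, although this interpretation is not needed for the proof itself.
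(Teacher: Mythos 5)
Your proof is correct and is essentially the same as the paper's: the paper writes the recursion as $u^{i+1}=Ju^{i}$ for the upper-triangular all-ones matrix $J$ and asserts that $J^{a-1}=A(k,a)$ follows by an easy induction, and your inductive step with the hockey-stick identity is precisely that induction carried out explicitly.
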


\begin{proof}
	Consider the $(k+1)\times(k+1)$ matrix $J$ with entries $J_{ij}$ given by
	
	\[
	J_{ij}=
		\begin{cases}
			1\quad \text{ for $j\geq i$}\\
			0\quad \text{ for $j<i$}.
		\end{cases}
	\]
	
	Using this matrix the recursion can be stated as $u^{i+1}=Ju^{i}$. Therefore, $u^{a}=J^{a-1}u^1$. An easy induction argument shows that $J^{a-1}=A(k,a)$.
\end{proof}

Suppose now that we want to count the number of generalized lattice paths in the snake $S(a_1,a_2,\ldots,a_n)$ whose coordinates are integer multiples of $\frac{1}{k}$. A standard technique is to count the number of paths recursively starting in the lower left corner and then writing at each bend point $p$ the number of possible ways to get to that point. The number in $p$ equals to the sum of the numbers in the possible points that precede $p$.

\begin{figure}[htbp]
	\centering
		\includegraphics[width=0.75\textwidth]{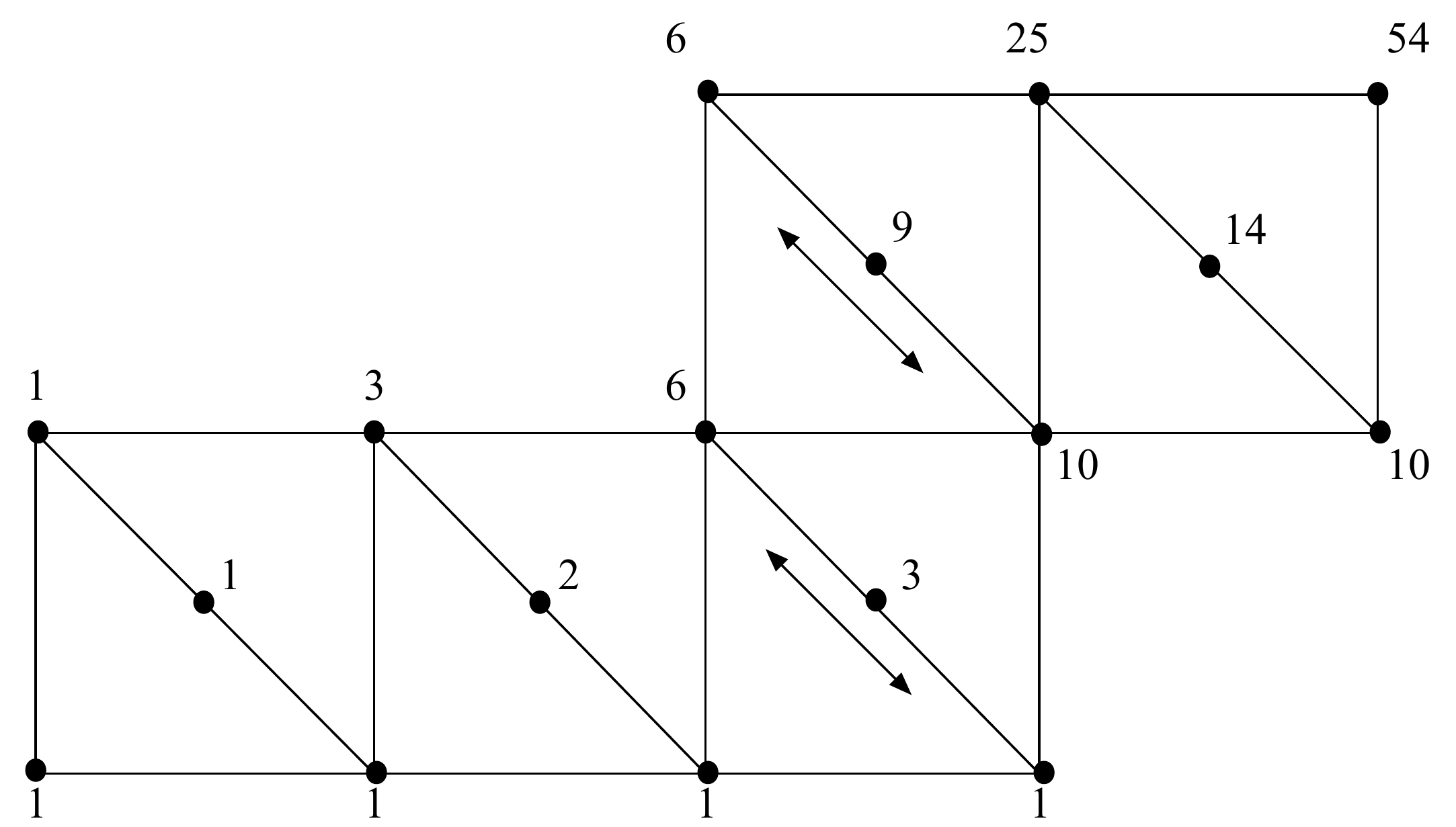}
 \caption{Counting bends horizontally and vertically.}	\label{fig:ExBends}
\end{figure}

As long as the snake is horizontal, the process above yields the recursion in Lemma~\ref{lem:recursion}: we add the numbers in points before and below $p$. However, whenever the snake bends the numbers in the diagonal play an ``inverse role'' and we have to invert the recursion accordingly, see Figure~\ref{fig:ExBends}. If we proceed inductively on the number of bends in the snake we obtain the following result.

\begin{thm}\label{thm:count} Let $k\ge 1$ be an integer. Then,
the number of integer points in $kP_{S(a_1,\ldots,a_n)}$ is $$u^T \left(\prod_{j=1}^n A(k,a_i)R\right) u,$$	where $u=(1,1,\ldots,1)$ is the vector
with $k+1$ ones, $R$ is the matrix that inverts the coordinates of a vector and the matrices $A(k,a_i)$ are defined as in Lemma~\ref{lem:recursion}.
\end{thm}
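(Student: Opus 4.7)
The plan is to use Corollary~\ref{cor:integ} to reduce the count to a transfer-matrix argument along the snake. By that corollary, integer points of $kP_{S(a_1,\ldots,a_n)}$ are in bijection with generalized lattice paths whose bend points lie on the $\frac{1}{k}$-refined grid of the snake diagram. Equivalently, after scaling by $k$, we count ordinary lattice paths on the refined diagram obtained by subdividing each unit square of $S(a_1,\ldots,a_n)$ into $k^2$ small squares.

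First, I would set up a dynamic-programming vector. Order the internal corners of the snake so that, after the $i$-th straight stretch (of length $a_i$), one sits on a diagonal strip crossing the corner square; refine by $k$ and observe that such a diagonal contains exactly $k+1$ relevant lattice points. Let $v^{(i)} \in \mathbb{R}^{k+1}$ record the number of refined lattice paths of the snake that terminate at each of those $k+1$ points. The initial vector $v^{(0)} = u = (1,\ldots,1)^T$ accounts for the unique starting corner together with the convention that the first stretch begins with one way to sit at every position on the initial refined diagonal; similarly, summing coordinates against $u$ at the end extracts the total count at the terminal corner.

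Next I would analyze the two elementary moves. Within the $i$-th straight stretch, advancing the sweep by one unit replaces each entry $v_j$ by the sum $\sum_{\ell \ge j} v_\ell^{\mathrm{old}}$, which is exactly the recursion of Lemma~\ref{lem:recursion}. Iterating this $a_i$ times multiplies the vector by $J^{a_i-1}$, and the lemma identifies $J^{a_i-1}$ with $A(k,a_i)$. At a bend of the snake, the direction of monotonicity of the vector indexing swaps (what was "points above the corner" becomes "points to the right of the corner"), so the appropriate transition between two consecutive stretches is multiplication by the coordinate-reversing matrix $R$. One then writes
\[
v^{(n)} \;=\; \Bigl(\prod_{i=1}^{n} A(k,a_i) R\Bigr)\, u,
\]
and the total number of integer points is the sum of entries of $v^{(n)}$, that is, $u^T v^{(n)}$.

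The main obstacle will be the bookkeeping at the first and last stretches and at each bend: one must verify that exactly $k+1$ refined lattice points are relevant on each transition diagonal, that the initial vector $u$ (rather than $e_1$) is the correct one after accounting for the shape of the first corner, and that the reversal $R$ is the precise transition map. Once these identifications are checked—most naturally by an induction on the number $n$ of stretches, with base case $n=1$ reducing to Lemma~\ref{lem:recursion} applied once—the product formula follows directly.
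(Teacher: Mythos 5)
Your proposal is correct and follows essentially the same route as the paper: the paper also counts the $\frac{1}{k}$-refined generalized lattice paths by a transfer-matrix recursion along the diagonals of the snake, identifying the straight stretches with powers of $J$ via Lemma~\ref{lem:recursion} and the bends with the coordinate-reversal $R$, and argues by induction on the number of bends. Your version merely spells out the bookkeeping (the $k+1$ points per diagonal, the roles of the initial and final vectors $u$) slightly more explicitly than the paper does.
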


When $n=2$ we have a snake with just one bend, say $S(a,b)$. This allows the formula above to be simplified to a polynomial in $k$.

\begin{proof}[Proof of Theorem~\ref{thm:ehrhart}]
	Theorem~\ref{thm:count} states that the number of lattice points in the dilation $kP_{S(a,b)}$ is equal to 
	\[
		u^TA(k,b)RA(k,a)u.
	\]
	
	First, the $j^{th}$ entry of $A(k,a)u$ is:
	
	\[
		\sum_{i=0}^{k+1-j} \binom{a-2+i}{i}= \binom{a+k-j}{k+1-j}.
	\]
	
	When we invert the coordinates, we get the vector
	
	\[
		\left(\binom{a-1}{0}, \binom{a}{1}, \ldots, \binom {a+k-1}{k}\right).
	\]
	
	Now we have to multiply from the left by $u^T A(k,b)$ or, equivalently, multiply by $A(k,b)$ and sum the coordinates of the obtained vector. After multiplying we can arrange the sum of entries as follows:
	
	\begin{align*}
		\sum_{j=0}^k \sum_{i=0}^j &\binom{a-1+j}{j} \binom{b-2+i}{i} =\sum_{j=0}^k \binom{a-1+j}{j} \sum_{i=0}^j \binom{b-2+i}{i}\\
		&= \sum_{j=0}^k \binom{a-1+j}{j} \binom{b-1+j}{j}\\
		&= \sum_{j=0}^k \binom{a-1+j}{a-1} \binom{b-1+j}{b-1}\\
		&= \frac{1}{(a-1)!(b-1)!} \sum_{j=0}^k \left((a-1+j)\cdots(j+1)\right)\left((b-1+1)\ldots(j+1)\right)\\
		&= \frac{H(0)+H(1)+\ldots+H(k)}{(a-1)!(b-1)!}.
	\end{align*}

  In the last line $H$ is the polynomial
	
	\[
	  H(t)=(t+1)(t+2)\cdots(t+a-1)(t+1)(t+2)\cdots(t+b-1).
	\]
	
	The proof ends by expanding the polynomial using symmetric functions on the multiset $\{1,2,\ldots,a-1,1,2,\ldots,b-1\}$, grouping terms with the same $t$ exponent, using standard formulas for the sums of first powers and regrouping as a polynomial in $k$.

\end{proof}

\section{Distributive lattice structure}
\label{sec:lattice}

In this section we will study a lattice structure induced by lattice path matroids.  For more on posets and lattices we refer the reader to~\cite{Dav2002}.

\subsection{Distributive polytopes}
A polytope $P\subseteq \mathbb{R}^n$ is called \emph{distributive} if for all $x,y\in P$ also their componentwise maximum and minimum $\max(x,y)$ and $\min(x,y)$ are in $P$, see Figure \ref{fig:dist-polex}.
The point set of these polytopes forms a distributive lattice as a sublattice of the componentwise ordering of $\mathbb{R}^n$. The latter is a distributive lattice itself with join and meet operations  componentwise maximum and minimum, respectively. Distributive polytopes have been characterized combinatorially and geometrically in~\cite{Fel-11}.

\begin{figure}[ht] 
\centering
 \includegraphics[width=.46\textwidth]{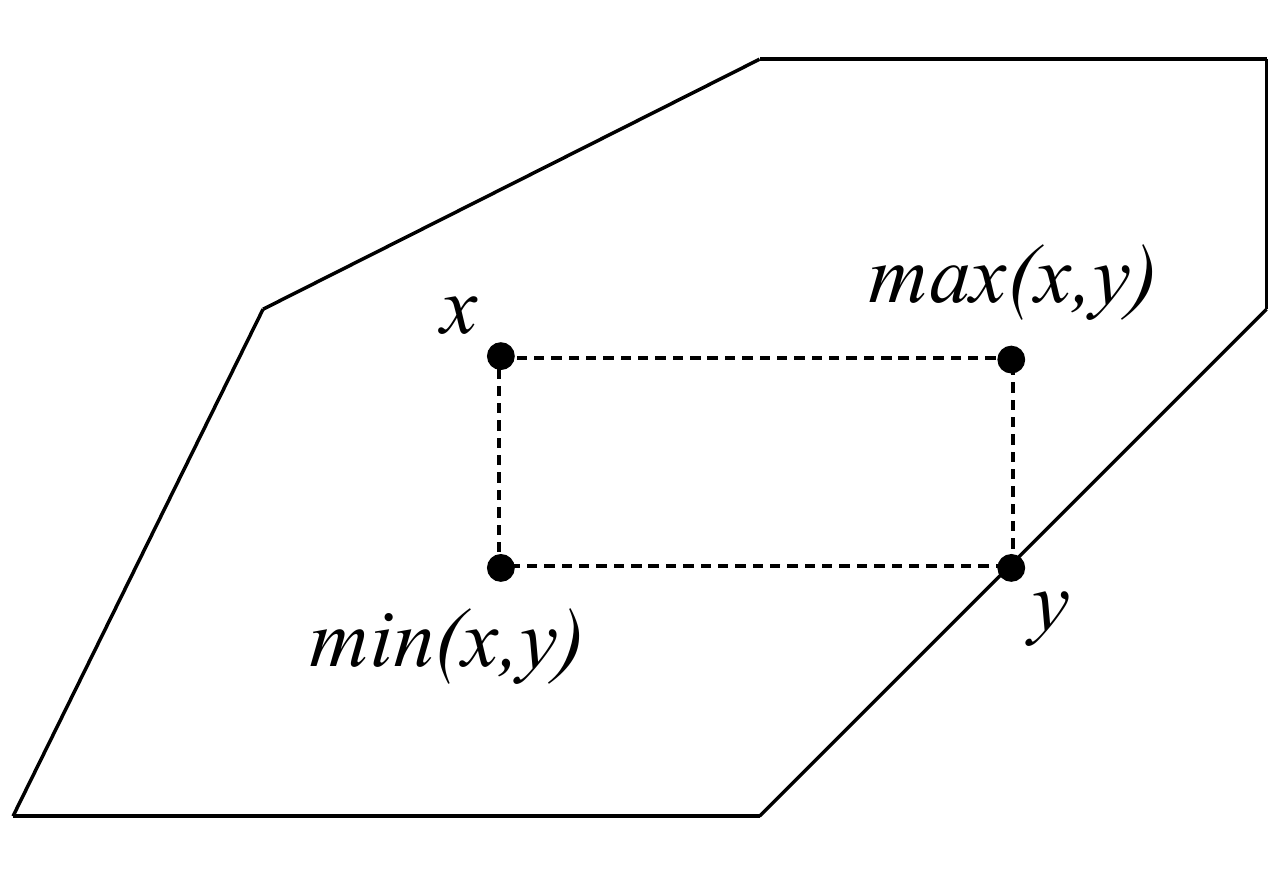}
 \caption{A distributive polytope in $\R^2$}
 \label{fig:dist-polex}
\end{figure} 

The following relates $P_M$ with distributive polytopes when $M$ is a LPM.

\begin{thm}\label{thm:dpoly}
Let $M=M[U,L]$ be a rank $r$ connected LPM on $r+m$ elements. Then, there exists a bijective affine transformation taking $P_M\subset\mathbb{R}^{r+m}$ into a full-dimensional distributive integer polytope $Q_M\subset\mathbb{R}^{r+m-1}$ consisting of all 
$q\in\mathbb{R}^{r+m-1}$ such that 

$$\begin{array}{lcl}
0\leq (-1)^{L_{i+1}}(q_{i+1}-q_i)\leq 1 & ~ & \text{ for all } i\in[r+m-2]\\
~&\text{and}&~\\
0\leq q_i\leq \sum\limits_{j=1}^i(U_j-L_j) & ~ &  \text{ for all } i\in[r+m-1].
\end{array}$$     

{And therefore,} $L_{P_M}(t)=L_{Q_M}(t)$. 
\end{thm}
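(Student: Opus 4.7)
The plan is to exhibit an explicit affine map $\phi\colon\mathbb{R}^{r+m}\to\mathbb{R}^{r+m-1}$ defined by $\phi(p)_i = \sum_{j=1}^{i}(p_j - L_j)$ for $1\le i\le r+m-1$, and to check that it restricts to a bijection between $P_M$ and the polytope $Q_M$ described in the statement. Using the halfspace description of $P_M$ from Theorem~\ref{thm:polypaths1}, the substitution $q_i = \sum_{j=1}^{i}(p_j - L_j)$ translates $\sum_{j=1}^i L_j \le \sum_{j=1}^i p_j \le \sum_{j=1}^i U_j$ directly into $0 \le q_i \le \sum_{j=1}^i(U_j - L_j)$. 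From the identity $q_{i+1}-q_i = p_{i+1} - L_{i+1}$, the bounds $0\le p_{i+1}\le 1$ become $0 \le (-1)^{L_{i+1}}(q_{i+1}-q_i) \le 1$ in both cases $L_{i+1}\in\{0,1\}$. This already shows $\phi(P_M)\subseteq Q_M$.

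For the reverse inclusion I would reconstruct, from $q\in Q_M$, a preimage by setting $p_i := q_i - q_{i-1} + L_i$ for $i\le r+m-1$ (with $q_0=0$) and $p_{r+m} := r - \sum_{j<r+m} p_j$. All halfspace inequalities of $P_M$ for indices $i\le r+m-1$ are immediate; the only non-trivial check is $0\le p_{r+m}\le 1$. Here one uses that $L$ and $U$ both terminate at $(m,r)$, so $\sum U_j = \sum L_j = r$ and moreover $L_{r+m}\ge U_{r+m}$ since $L$ stays weakly below $U$ along the final diagonal. A short case analysis on the admissible pairs $(L_{r+m},U_{r+m})\in\{(0,0),(1,0),(1,1)\}$ then shows that the bounds $0\le q_{r+m-1}\le\sum_{j=1}^{r+m-1}(U_j-L_j)$ force $p_{r+m}\in[0,1]$. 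Since the inverse is explicit and linear, $\phi$ is an affine bijection on $\mathrm{aff}(P_M)$, and full-dimensionality of $Q_M$ follows from $\dim P_M = r+m-1$ (connectedness of $M$).

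Distributivity of $Q_M$ is checked constraint-wise. The box inequalities $0\le q_i\le\sum_{j=1}^i(U_j-L_j)$ are trivially preserved by componentwise maximum and minimum. The remaining inequalities take the form $q_i\le q_{i+1}\le q_i+1$ when $L_{i+1}=0$ and $q_{i+1}\le q_i\le q_{i+1}+1$ when $L_{i+1}=1$; in each case both the monotonicity and the Lipschitz bound are preserved by max and min via the elementary identities $\max(a+1,b+1)=\max(a,b)+1$ and the obvious monotonicity of the lattice operations. Finally, since both $\phi$ and $\phi^{-1}$ have integer coefficients, $\phi$ induces a bijection between $kP_M\cap\mathbb{Z}^{r+m}$ and $kQ_M\cap\mathbb{Z}^{r+m-1}$ for every $k\in\mathbb{N}$; in particular vertices of $P_M$ (which are $0/1$-vectors) go to integer vertices of $Q_M$, so $Q_M$ is an integer polytope and $L_{P_M}(t)=L_{Q_M}(t)$.

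The step I expect to be the main obstacle is verifying that the constraint $p_{r+m}\in[0,1]$ is implicit in the halfspace description of $Q_M$: this is the only place where the global endpoint condition on $L$ and $U$ must be invoked non-formally, and the sign-alternating shape of the Lipschitz-type constraints makes it slightly delicate to keep track of signs across the case analysis.
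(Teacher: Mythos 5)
Your proposal is correct and follows essentially the same route as the paper: the same affine map $q_i=\sum_{j=1}^{i}(p_j-L_j)$, the same translation of the halfspace description of $P_M$ into that of $Q_M$, the same explicit inverse $p_i=q_i-q_{i-1}+L_i$ with $p_{r+m}=r-\sum_{j<r+m}p_j$, and the same observation that integrality of the map in both directions yields $L_{P_M}(t)=L_{Q_M}(t)$. The only difference is that you make explicit the endpoint verification $p_{r+m}\in[0,1]$ (which reduces to $p_{r+m}=L_{r+m}-q_{r+m-1}$ together with $0\le q_{r+m-1}\le L_{r+m}-U_{r+m}$), a step the paper dismisses as ``easy to see.''
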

\begin{proof}
Let $\st(L)=(L_1,\dots ,L_{r+m})$ and $\st(U)=(U_1,\dots ,U_{r+m})$. Then, by Theorems~\ref{thm:orderpoly} and~\ref{thm:polypaths1}, we have that 
 $$P_M=\left\{p\in\mathbb{R}^{r+m}\mid 0\leq p_i\leq 1 \text{ and } \sum_{j=1}^i L_j\leq \sum_{j=1}^i p_j\leq \sum_{j=1}^i U_j \text{ for all } i\in[r+m] \right\}.$$
 

Let
$$\begin{array}{llll}
\pi : & P_M\subset \mathbb{R}^{r+m} & \longrightarrow & \mathbb{R}^{r+m-1}\\
 & p=(p_1,\dots ,p_{r+m}) & \mapsto & (p_1-L_1,\dots , \sum_{j=1}^{r+m-1} (p_j-L_j))\\
\end{array}$$

We thus have that $\pi$ is an affine mapping consisting of a translation by $-\st(L)$ and of the linear map using the above halfplane description of $P_M$. Clearly, $\pi$ is injective. Let $p$ be a point in $P_M$ and let $P$ the corresponding generalized path of $M$ with $\st(P)=p$.
Let $\pi(p)=\pi(p_1,\dots ,p_{r+m})=(q_1,\dots ,q_{r+m-1})$.  By Remark~\ref{rem:slope} (e) we have both 
$$\sum\limits_{j=1}^i L_j\leq \sum\limits_{j=1}^i P_j \text{ for all }i\in[r+m] \text{ and thus } q_i=\sum\limits_{j=1}^i (P_i-L_j)\ge 0$$
and 
$$\sum_{j=1}^i P_j\leq \sum\limits_{j=1}^i U_j \text{  for all } i\in[r+m] \text{ and thus } q_i=\sum\limits_{j=1}^i (P_i-L_j)\le \sum\limits_{j=1}^i (U_i-L_j).$$ 

Now, 
$$q_{i+1}-q_i=\sum\limits_{j=1}^{i+1}( p_j-L_j)-\sum\limits_{j=1}^{i}( p_j-L_j)=p_{i+1}-L_{i+1}.$$

Since $L_{i+1}=0$ or $1$ and $0\leq p_{i+1}\leq 1$ then we clearly have that  $-1\le p_{i+1}-L_{i+1}\le 0$ if $L_{i+1}=1$ and $0\le p_{i+1}-L_{i+1}\le 1$ if $L_{i+1}=1$.

Therefore, $\pi(P_M)$ is a polytope contained in a polytope $Q_M$ having the following description 

$$Q_M=\{q\in\mathbb{R}^{r+m-1}\mid  0\leq (-1)^{L_{i+1}}(q_{i+1}-q_i)\leq 1 \text{ for all } i\in[r+m-2] \text{ and }$$
$$ \hspace{4cm} 0\leq q_i\leq \sum\limits_{j=1}^i(U_j-L_j) \text{ for all } i\in[r+m-1]\}.$$
 
Conversely, it is easy to see that $Q_M\subseteq \pi(P_M)$ by constructing for a given $q\in Q_M$ a preimage $p$ in $P_M$ under $\pi$ by setting $p_1=q_1$ and $p_i=q_i-p_{i-1}+\sum\limits_{j=1}^iL_j$ for $1<i<r+m$ and $p_{r+m}=r-\sum\limits_{j=1}^{r+m-1}p_j$.
 
By using the above description or the characterization in~\cite{Fel-11}, it is straight-forward to verify that $Q_P$ is closed under componentwise maximum and minimum. Therefore, $Q_P$ is a distributive polytope. 
\smallskip

Furthermore, since $\dim(P_M)=r+m-1$ then $Q_M$ is full-dimensional.  Let $k$ be a positive integer. It is also immediate that $\pi$ sends points in $\frac{1}{k}\mathbb{Z}^{r+m}$ to $\frac{1}{k}\mathbb{Z}^{r+m-1}$. Indeed, if $\pi$ would send a point $p\in P\setminus\frac{1}{k}\mathbb{Z}^{r+m}$ to $\frac{1}{k}\mathbb{Z}^{r+m-1}$, then it would exist a minimal index $i$ such that $p_i\notin\frac{1}{k}\mathbb{Z}$ and since $\sum\limits_{j=1}^{i-1} p_j\in \frac{1}{k}\mathbb{Z}$ we would have $\sum\limits_{j=1}^{i-1} p_j + p_i\notin \frac{1}{k}\mathbb{Z}$ which would be a contradiction. {And thus, $L_{P_M}(t)=L_{Q_M}(t)$}. 
\end{proof}

\begin{rem} \label{remextdiscon} Theorem \ref{thm:dpoly} can be extended to a disconnected LPM $M$ as follows. Let  $M=M_1\oplus\dots\oplus M_c$ where $M_i$ is a connected LPM on $e_i=r_i+m_i$ elements with $r_i$ and $m_i$ the number of lines (rank) and columns in the presentation of $M_i, i=1,\ldots ,c$.  Thus, $M=M[U,L]$ is an LPM with $c$ connected components and its representation consists of identifying the top-right corner of  $M_i$ with the bottom-left corner of $M_{i+1}$ for each $i=1,\dots ,c-1$, see Figure~\ref{fig:directsumlpm}. We obtain that $L=L_1, \ldots , L_c$ and $U=U_1, \ldots , U_c$ 
and thus $M[U,L]$ is of rank $r=\sum\limits_{i=1}^c r_i$ having $n=\sum\limits_{j=1}^c (r_j+m_j)$ elements. 

We clearly have that $\sum\limits_{j=1}^i L_i < \sum\limits_{j=1}^i U_i$ for all $i\in [n]$ except  at the values $i=r_1+m_1, r_1+m_1+r_2+m_2,\dots , \sum\limits_{j=1}^c (r_j+m_j)$, that is, except at the $c$ points where the paths $U$ and $L$ meet (other than $(0,0)$). 

Theorem~\ref{thm:dpoly} can be generalized to $M$ by considering the map

$$\begin{array}{llll}
\psi : & P_M\subset \mathbb{R}^{r+n} & \longrightarrow & \mathbb{R}^{r+n-c}\\
 & p=(p_1,\dots ,p_{r+n}) & \mapsto & \psi (p)=(\psi_1,\dots ,\psi_{r+n-c})\\
\end{array}$$

where $\psi_i$ is the $i^{th}$ nonzero coordinate of $q=(q_1,\dots ,q_{r+n})$ where $q_i=\sum_{j=1}^i (p_j-L_j)$ for all $i\in[n]$ with $\sum\limits_{j=1}^i L_i < \sum\limits_{j=1}^i U_i$ and zero otherwise (that is, $q_i=0$ when $i=r_1+m_1, r_1+m_1+r_2+m_2,\dots , \sum\limits_{j=1}^c (r_j+m_j)$).

Notice that if $M$ is connected then $U$ and $L$ meet only at one point (other than $(0,0)$) and $\psi$ becomes the map $\pi$ given in Theorem~\ref{thm:dpoly}. 
\end{rem}

\begin{examp} Let us consider the snake $S(1,2)$. Since $S(1,2)$ consists of 3 elements and it is connected then  $\dim(P_{S(1,2)})=3-1=2$. We have

$P_{S(1,2)}=\{p\in\mathbb{R}^{3}\mid 0\leq p_i\leq 1 \text{ and } 0\le p_1 \le 1; $

$\hspace{3.2cm}1\le p_1+p_2 \le 2; 2\le p_1+p_2+p_3 \le 2\}.$

We notice that the vertices of $P_{S(1,2)}$ correspond to the three bases $u=\st(U)=(1,1,0), b=\st(B)=(1,0,1)$ and $l=\st(L)=(0,1,1)$, see Figure~\ref{fig:dist-pollex}. 

\begin{figure}[ht] 
\centering
 \includegraphics[width=.7\textwidth]{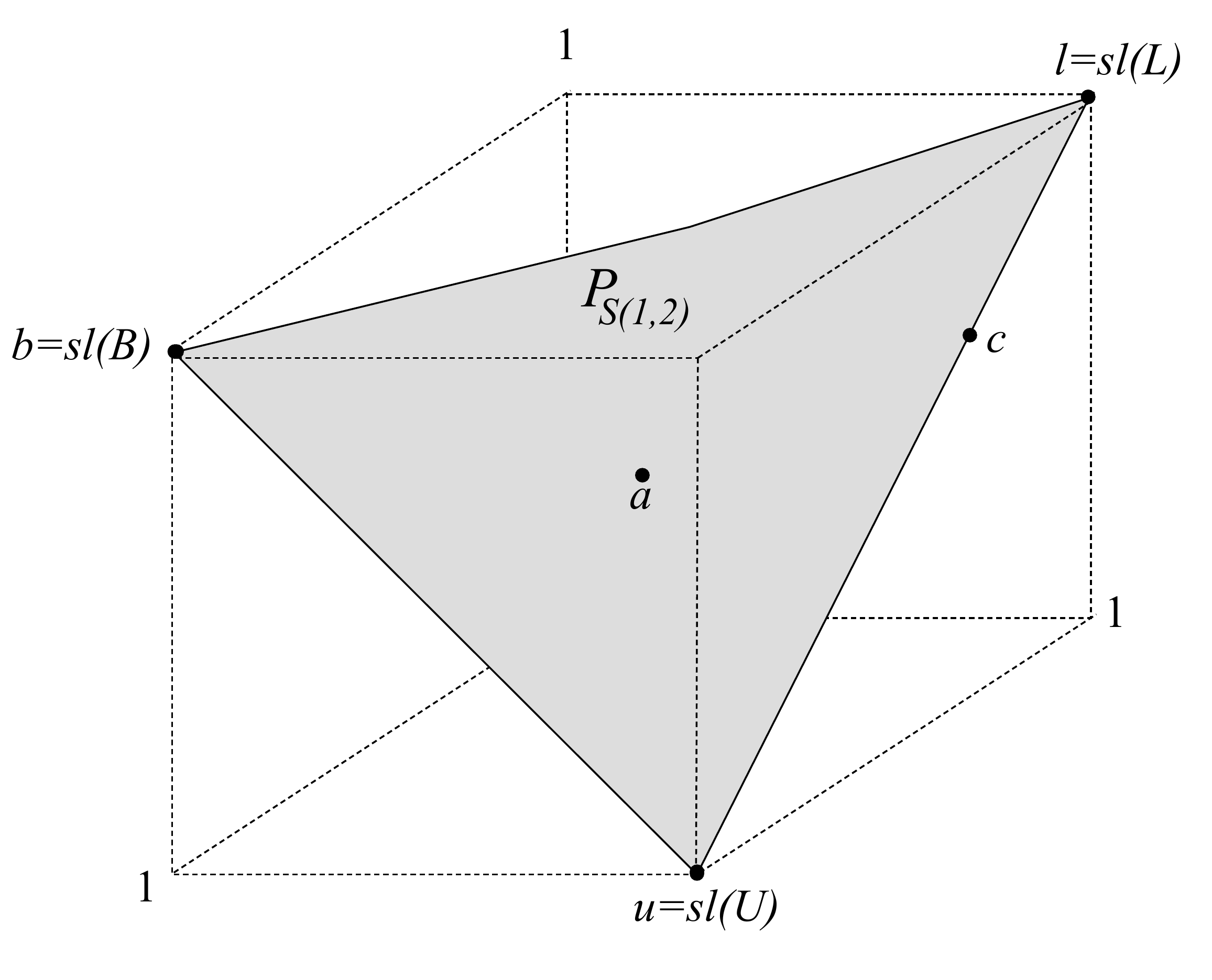}
 \caption{$P_{S(1,2)}$ where the three points $a,b,c$ correspond to the step vectors of the generalized paths $A,B,C$ given in Example~\ref{ex:gen}.}
 \label{fig:dist-pollex}
\end{figure}

Since $\pi(\st(U))=(1,1)$ and $\pi(\st(L))=(0,0)$  then
$$Q_{S(1,2)}=\{q\in\mathbb{R}^{2}\mid  0\leq q_{1}-q_2\leq 1 \text{ and } 0\le q_1,q_2\le 1\}.$$

$Q_{S(1,2)}$ is illustrated in Figure~\ref{fig:dist-pollex1}. We can check that $\pi(a)=(\frac{3}{4}-0,\frac{6}{4}-1)=(\frac{3}{4},\frac{1}{2}), \pi(b)=(1-0,1-1)=(1,0)$ and $\pi(c)=(\frac{1}{4}-0,\frac{5}{4}-1)=(\frac{1}{4},\frac{1}{4})$.

 \begin{figure}[ht] 
\centering
 \includegraphics[width=.49\textwidth]{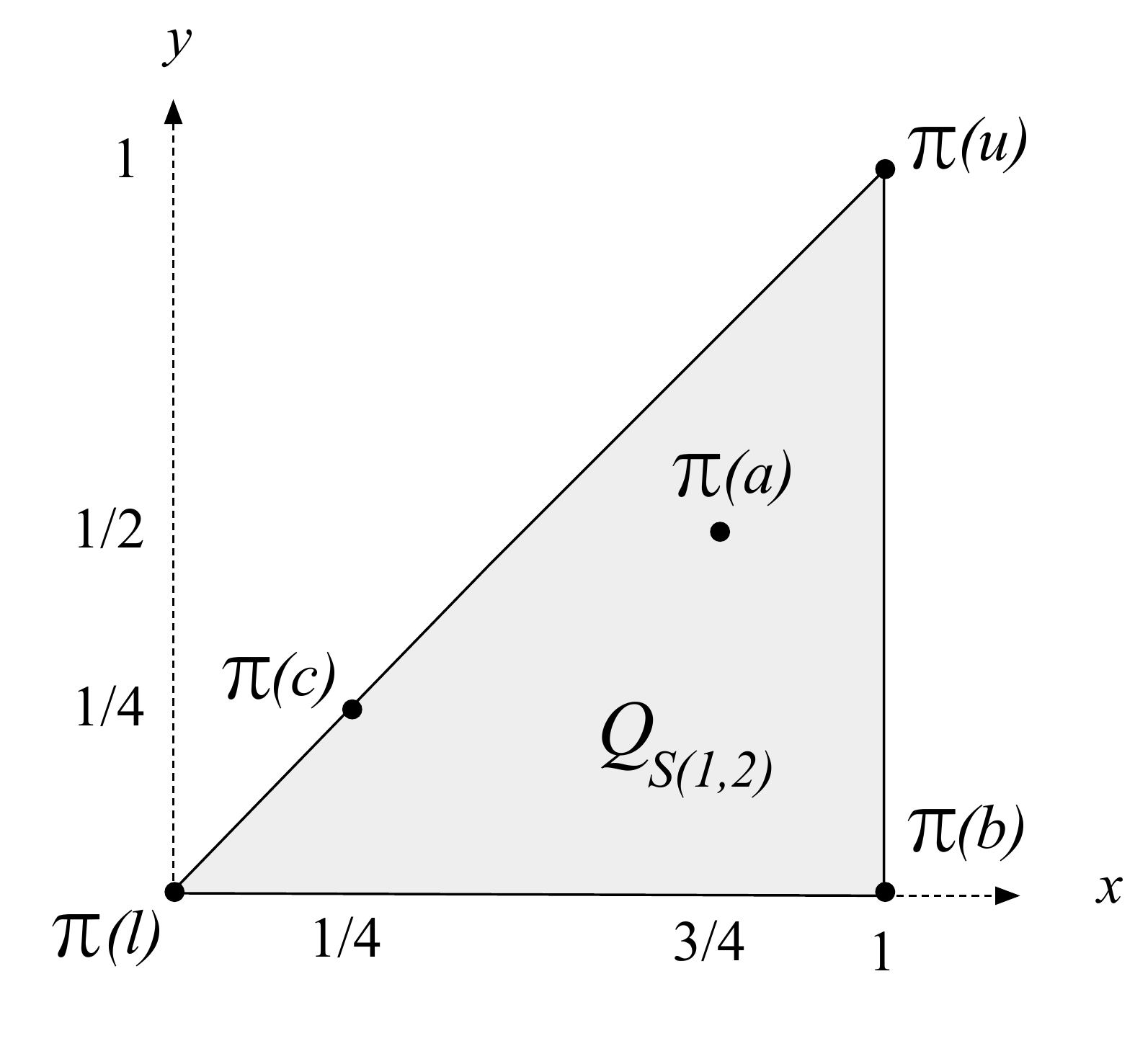}
 \caption{Polytope $Q_{S(1,2)}$.}
 \label{fig:dist-pollex1}
\end{figure}
\end{examp}

\begin{examp} Let us consider the snake $S(2,2)$ (``inverted L'').  Notice that $S(2,2)$ is given by $U_{2,4}$ from which the base $\{1,2\}$ is deleted.  It is known that $P_{U_{2,4}}$ is the octahedron and thus $P_{S(2,2)}$ is the pyramid obtained from removing the north vertex of the octahedron. 
\smallskip

Since $\st(U)=(1,0,1,0)$ and $\st(L)=(0,0,1,1)$ then $\pi(\st(U))=(1-0,1-0,2-1)=(1,1,1)$ and $\pi(\st(L))=(0-0,0-0,0-0)=(0,0,0)$ and thus
$$Q_{S(2,2)}=\{q\in\mathbb{R}^{3}\mid  0\leq q_{2}-q_1\leq 1, 0\leq q_{2}-q_3\leq 1 \text{ and } 0\le q_1,q_2,q_3\le 1\}.$$
 See Figure~\ref{fig:dist-pol} for an illustration.
  
\begin{figure}[ht] 
\centering
 \includegraphics[width=.5\textwidth]{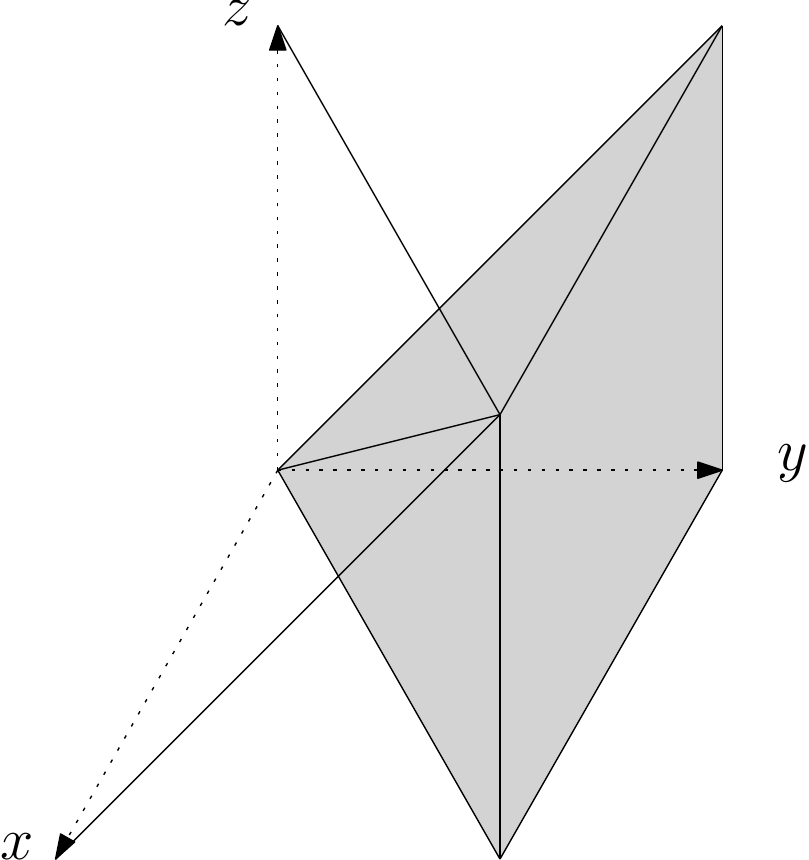}
 \caption{The distributive polytope $Q_{S(2,2)}$.}
 \label{fig:dist-pol}
\end{figure} 
\end{examp}


We are able to determine a distributive lattice structure in $P_M$ by using the above application $\pi$, i.e., for $p,p'\in P_M$ we have $p\leq p'$ if and only if $\pi(p)\leq \pi(p')$ with respect to the componentwise ordering in $Q_M$. In particular, since the set $\frac{1}{t}\mathbb{Z}^{r+m-1}$ is closed under componentwise minimum and maximum then $\pi$ restricts to a distributive lattice on $tQ_M\cap\mathbb{Z}^{r+m-1}$ which carries over to $tP_M\cap\mathbb{Z}^{r+m}$. This leads to:

\begin{cor}\label{cor:dl} Let $M=M[U,L]$ be a rank $r$ connected LPM on $r+m$ elements and let $k$  be a positive integer. Then, both the points in $P_M$ and the points in $kP_M\cap\mathbb{Z}^{r+m}$ carry a distributive lattice structure defined by 
 $$p\leq p' \text{ if and only if } \sum\limits_{j=1}^i p_j\leq \sum\limits_{j=1}^i p'_j \text{ for all } i\in [r+m].$$ Moreover, in $\mathcal{C}_M$ and $\mathcal{C}^k_M$ this order corresponds to one generalized lattice path lying above another generalized lattice path.
 
\end{cor}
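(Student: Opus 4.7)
The plan is to transport the distributive lattice structure of $Q_M$ back to $P_M$ via the bijective affine map $\pi$ from Theorem~\ref{thm:dpoly}, and then verify that both the stated partial order and the interpretation in terms of generalized lattice paths are the right ones.

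First I would observe that, by Theorem~\ref{thm:dpoly}, $Q_M$ is a distributive polytope in $\mathbb{R}^{r+m-1}$, so it is closed under coordinatewise maximum and minimum. Pulling back via $\pi^{-1}$ defines binary operations $p\wedge p'=\pi^{-1}(\min(\pi(p),\pi(p')))$ and $p\vee p'=\pi^{-1}(\max(\pi(p),\pi(p')))$ on $P_M$, which inherit the distributive lattice axioms from $Q_M$ because $\pi$ is a bijection. The corresponding partial order on $P_M$ is $p\le p'$ iff $\pi(p)_i\le \pi(p')_i$ for all $i\in[r+m-1]$. Unwinding $\pi(p)_i=\sum_{j=1}^i(p_j-L_j)$, this reads $\sum_{j=1}^ip_j\le \sum_{j=1}^ip'_j$ for all $i\in[r+m-1]$. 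The case $i=r+m$ can be appended for free since both sides equal $r$ (both $p$ and $p'$ correspond to paths ending at $(m,r)$, cf.\ Remark~\ref{rem:slope}~(e)). This yields the stated description of $\le$.

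Next I would handle the integer points. From the proof of Theorem~\ref{thm:dpoly}, $\pi$ sends $\tfrac{1}{k}\mathbb{Z}^{r+m}$ into $\tfrac{1}{k}\mathbb{Z}^{r+m-1}$, and the explicit inverse $p_1=q_1+L_1$, $p_i=q_i-q_{i-1}+L_i$, $p_{r+m}=r-\sum_{j<r+m}p_j$ preserves the same integrality. Therefore $\pi$ restricts to a bijection $kP_M\cap\mathbb{Z}^{r+m}\to kQ_M\cap\mathbb{Z}^{r+m-1}$. Since $\mathbb{Z}^{r+m-1}$ is closed under coordinatewise $\min$ and $\max$, and $kQ_M$ (being a scaled distributive polytope) is too, the right-hand side is a sublattice of $\mathbb{Z}^{r+m-1}$, and hence the left-hand side inherits a distributive lattice structure defined by exactly the same inequalities as above.

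Finally, for the ``moreover'' statement, I would appeal to Theorem~\ref{thm:polypaths1} and the coordinates of the bend points written down in the proof of Theorem~\ref{thm:polypaths}: for a generalized lattice path $P$ with $\st(P)=(p_1,\dots,p_{r+m})$, the $i$-th bend point is $(i-\sum_{j=1}^i p_j,\sum_{j=1}^i p_j)$, so $\sum_{j=1}^ip_j$ is precisely the $y$-coordinate of this bend point. Thus $\sum_{j=1}^ip_j\le \sum_{j=1}^ip'_j$ for all $i$ is equivalent to saying that every bend point of $P'$ has $y$-coordinate at least that of the corresponding bend point of $P$, which (since both paths are monotone and have bend points on the same diagonals $l_i$) is exactly the statement that $P'$ lies weakly above $P$ in the diagram of $M[U,L]$. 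The only subtle step is ensuring that distributivity is genuinely transported (not just a poset structure), which follows automatically from the fact that $\pi$ is an \emph{affine} bijection between the polytopes and hence a poset isomorphism preserving the already-distributive lattice operations of $Q_M$; this is really the only delicate point in the argument.
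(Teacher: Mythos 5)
Your proposal is correct and follows essentially the same route as the paper, which derives the corollary by pulling back the componentwise order of the distributive polytope $Q_M$ through the affine bijection $\pi$ of Theorem~\ref{thm:dpoly}, noting that $\tfrac{1}{k}\mathbb{Z}^{r+m-1}$ is closed under componentwise $\min$ and $\max$, and reading off $\sum_{j=1}^i p_j$ as the $y$-coordinate of the $i$-th bend point. Your explicit inverse $p_i=q_i-q_{i-1}+L_i$ is in fact the correct form of the (slightly garbled) inverse written in the paper's proof of Theorem~\ref{thm:dpoly}, so no gap remains.
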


\subsection{Chain partitioned poset}
We study further the distributive lattices given in Corollary~\ref{cor:dl}. Let us quickly recall some definitions and notions needed for the rest of the section.
\smallskip

Given a poset $X$, an \emph{order ideal} $I\subseteq X$ is a set such that $x\in I$ and $y\leq x$ implies $y\in I$. 
The poset $\ideals(X)$ of all order ideals of $X$ (ordered by containment) is a distributive lattice.
An element $\ell$ of a lattice $L$ is \emph{join-irreducible}
if it cannot be written as the join of two other elements, that is, if $\ell=\ell'\vee \ell''$ then $\ell=\ell'$ or $\ell=\ell''$. 
The induced subposet (not sublattice) of $L$ consisting of all join-irreducible elements is denoted by $\joinirrs(L)$

The Fundamental Theorem of Finite Distributive Lattices ({FTFDL})~\cite{Bir-37} states:
\begin{quote}
Up to isomorphism, the finite distributive lattices are exactly the lattices $\ideals(P)$ where $P$ is a finite poset. Moreover, $L$ is isomorphic to $\ideals(\joinirrs(L))$ for every lattice $L$ and $P$ isomorphic to $\joinirrs(\ideals(P))$ for every poset $P$.
\end{quote}

We say that a distributive lattice $L$ is {\em embedded} in $\mathbb{Z}^n$ if 
\begin{itemize}                                                     	

\item the affine hull of $L$ is $\mathbb{Z}^n$ \hfill (full-dimensional),                                                                               \item $L$ is a sublattice of the componentwise order of $\mathbb{Z}^n$ \hfill (sublattice),                                                                               	\item the minimum of $L$ is $\mathbf{0}\in\mathbb{Z}^n$ \hfill (normalized),                                                                               \item if $\ell\prec\ell'$ in $L$ then $\ell-\ell'=e_i$ for some unit vector $e_i$ \hfill (cover-preserving).                                                                               \end{itemize}

Dilworth's Theorem~\cite{Dil-50} generalizes the {FTFDL} to a bijection between embedded distributive sublattices and {{\em chain partitioned posets} (a {\em chain partition} of a poset $X$ is a partition of its ground set into disjoint chains $C_1,\dots ,C_n$)}. More precisely, given a chain partitioned poset $(X,C_1, \ldots, C_n)$ we associate an embedded distributive lattice by mapping ideals $I$ of $X$ to $\mathbb{Z}^n$ via 
$$\phi(I)_i:=|I\cap C_i| \text{  for all }i\in[n].$$ 

Conversely, given a join-irreducible element $\ell$ of the embedded lattice $L$, it covers a unique $\ell'$ and $\ell-\ell'=e_i$ for some unit vector $e_i$. We put $\ell$ into the chain $C_i$. The {FTFDL} corresponds to embeddings into $\{0,1\}^n$ and posets with the trivial singleton-chain partition.
\smallskip

By using Theorem~\ref{thm:polypaths}, the distributive lattice defined in Corollary~\ref{cor:dl} can be viewed in $\mathcal{C}_M$ as follows: for two generalized lattice paths $P$ and $Q$, we have $P\geq Q$ if for every line $l_i$ in the diagram the $y$-coordinate of $P\cap\ell$ is larger or equal than the $y$-coordinate of $Q\cap\ell$. Equivalently, the area below $P$ contains $Q$. 
\smallskip

Let us now describe the distributive lattice structure on $kP_M\cap\mathbb{Z}^{r+m}$ corresponding to elements of $\mathcal{C}_M^k$ (by Corollary~\ref{cor:integ}).
\smallskip

Given the diagram representing $M$ it is easy to construct the chain partitioned poset representing the lattice on these paths, see Figure~\ref{fig:distlattice} for an illustration:
\begin{itemize}
 \item insert lines $T_i$ in the diagram,
 \item in each line $T_i$ add between any two consecutive grid points $k-1$ equidistant points,
 \item connect two of these new points with a line if and only if their difference is $(1,0)$ or $(0,1)$.
 \item remove the points lying on the lower path $L$,
 \item rotate the drawing by $45$ degree clockwise.
\end{itemize}

The resulting diagram is the Hasse diagram of a poset, that we denote by $X^k_M$. The chain partition that we consider simply puts all points on a given line $l_i$ into a chain $C_i$ of $X^k_M$. 

\begin{figure}[ht] 
\centering
 \includegraphics[width=\textwidth]{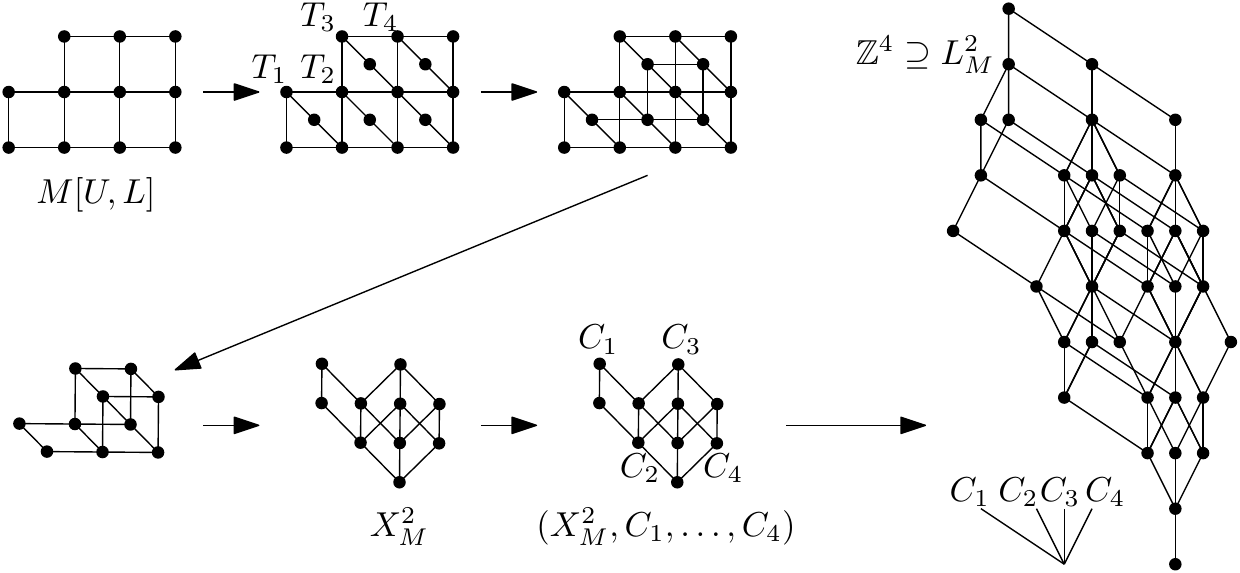}
 \caption{Constructing $X^2_M$, its chain partition, and the embedded $L^2_M\subseteq \mathbb{Z}^4$ from a diagram representing $M$.}
 \label{fig:distlattice}
\end{figure} 

Let $L^k_M$ be the embedded {distributive} lattice obtained from the chain partitioned poset $(X^k_M,C_1, \ldots, C_n)$ via the Dilworth's bijection as described above.

\begin{thm}\label{thm:poset} Let $M$ be a connected rank $r$ LPM on $r+m$ elements.
Then, $L^k_M=kQ_M\cap\mathbb{Z}^{r+m-1}$.
\end{thm}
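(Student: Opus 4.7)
The plan is to prove the equality by constructing an explicit bijection between order ideals of $X_M^k$ and integer points of $kQ_M$, mediated by the bijection between ideals and generalized lattice paths in $\mathcal{C}_M^k$. Both $L_M^k$ and $kQ_M\cap\mathbb{Z}^{r+m-1}$ sit inside $\mathbb{Z}^{r+m-1}$: connectedness of $M$ guarantees that $T_0=\{(0,0)\}$ and $T_{r+m}=\{(m,r)\}$ are the only $T_i$ meeting $L$ as single points, so after removing points on $L$ exactly the chains $C_1,\dots,C_{r+m-1}$ remain nonempty.

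First I would describe the ideal-path correspondence. The Hasse diagram of $X_M^k$ is a $1/k$-refinement of the integer grid inside $R(M[U,L])$, with cover relations given by refined unit steps of length $1/k$ in the $x$- or $y$-direction. An order ideal $I$ of $X_M^k$ is therefore determined by its upper-right boundary, a monotone polygonal path from $(0,0)$ to $(m,r)$ whose bend points lie at refined grid positions on each $T_i$; this boundary is precisely a generalized lattice path $P\in\mathcal{C}_M^k$, and conversely every such $P$ determines the down-set of refined grid points lying weakly below it.

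Next I would identify $\phi(I)$ with $k\pi(\st(P))$, where $\pi$ is the affine map from the proof of Theorem~\ref{thm:dpoly}. The chain $C_i$ consists of the $ks_i$ refined grid points on $T_i$ above its $L$-endpoint, where $s_i=\sum_{j=1}^i(U_j-L_j)$. If $P$ has bend point $(x_i,y_i)$ on $T_i$, then the number of points of $C_i$ lying weakly below $(x_i,y_i)$ is $k\bigl(y_i-\sum_{j=1}^i L_j\bigr)=k\sum_{j=1}^i(\st(P)_j-L_j)=k\pi(\st(P))_i$. By Corollary~\ref{cor:integ} the assignment $P\mapsto k\cdot\st(P)$ is a bijection $\mathcal{C}_M^k\leftrightarrow kP_M\cap\mathbb{Z}^{r+m}$; by Theorem~\ref{thm:dpoly} the affine map $\pi$ sends $P_M$ bijectively onto $Q_M$ and preserves $\tfrac{1}{k}\mathbb{Z}$, so the scaled version induces a bijection $kP_M\cap\mathbb{Z}^{r+m}\leftrightarrow kQ_M\cap\mathbb{Z}^{r+m-1}$. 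Composing with the ideal-path correspondence gives $\phi:\ideals(X_M^k)\to kQ_M\cap\mathbb{Z}^{r+m-1}$ as a bijection, proving $L_M^k=kQ_M\cap\mathbb{Z}^{r+m-1}$.

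The main obstacle is checking that the case-dependent defining inequality $0\leq(-1)^{L_{i+1}}(q_{i+1}-q_i)\leq k$ of $kQ_M$ is exactly what the chain structure of $X_M^k$ imposes on $\phi(I)$. When $L_{i+1}=0$ the chains $C_i$ and $C_{i+1}$ share their $L$-endpoint $y$-coordinate, giving $\phi(I)_{i+1}-\phi(I)_i=k(y_{i+1}-y_i)\in[0,k]$; when $L_{i+1}=1$ the $L$-endpoint of $C_{i+1}$ is shifted up by one, so the difference becomes $k(y_{i+1}-y_i-1)\in[-k,0]$. Aligning this case split with the orientation conventions arising from the $45^{\circ}$ rotation and the removal of points on $L$, and matching them against the algebraic formula for $\pi$, is routine but demands careful bookkeeping.
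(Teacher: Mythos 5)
Your proposal is correct and follows essentially the same route as the paper's proof: both identify order ideals of $X^k_M$ with generalized lattice paths in $\mathcal{C}^k_M$ via their upper boundaries and then observe that the Dilworth map $\phi$ sends such an ideal to the corresponding point of $kQ_M\cap\mathbb{Z}^{r+m-1}$ (in your notation, $\phi(I)=k\pi(\st(P))$). The only difference is one of detail: you verify the defining inequalities of $kQ_M$ by hand, whereas the paper delegates that step to Corollary~\ref{cor:dl}.
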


\begin{proof}
  A generalized lattice path with all its bends on coordinates $(x,y)$ such that $kx,ky,x+y\in\mathbb{N}$ corresponds to choosing the $ky^{th}$ element in chain $C_{x+y}$ of $(X^k_M,C_1, \ldots, C_n)$ for all its bends. Since generalized lattice paths are weakly monotone this choice indeed corresponds to an ideal in $X^k_M$. Conversely, any ideal in $X^k_M$ can be viewed as an element of $\mathcal{C}^k_M$. Thus, the Dilworth mapping satisfies $\phi(X^k_M,C_1, \ldots, C_n)=\mathcal{C}^k_M$. Now, by Corollary~\ref{cor:dl} the ordering on $\mathcal{C}^k_M$ corresponds to the distributive lattice structure on $kQ_M\cap\mathbb{Z}^{r+m-1}$ whose embedding clearly corresponds to the chain partition $C_1, \ldots, C_n$.
\end{proof}

\subsection{Application: Ehrhart polynomial for snakes} 
After having understood the combinatorics of the embedded distributive lattice $L^k_M$ let us combine these results with the polyhedral structure in the case of snakes. 
\smallskip

In this section, let $X$ be a poset on $\{1,\dots ,n\}$ such that this labeling is \emph{natural}, i.e., if $i<_Xj$ then $i<j$. 
The \emph{order polytope} $\mathcal{O}(X)$ of $X$ is defined as the set of those $x\in \mathbb{R}^n$ such that

\begin{equation}\label{o1}
0\le x_i\le 1, \text{ for all } i\in X
\end{equation}

\begin{equation}\label{o2}
x_i\ge x_j, \text{ if } i\le j \text{ in } X
\end{equation}

Note that $\mathcal{O}(X)$ is a convex polytope since it is defined by linear inequalities and is bounded because of \eqref{o1}.
\smallskip

Let $\chi(I)\in\mathbb{R}^n$ denote the characteristic vector of an ideal $I$, i.e.,
$$\chi(I)_i=\left\{ \begin{array}{ll}
1 & \text{ if } i\in I,\\
0 & \text{ if } i\not\in I.\\
\end{array}\right.$$

%
%
%
%
%

It is known~\cite[Corollary 1.3]{Sta-86} that the vertices of $\mathcal{O}(X)$ are the characteristic vectors $\chi(I)$ as $I$ runs through all order ideals $I$ in $X$. In particular, the number of vertices of $\mathcal{O}(X)$ is the number of ideals of $X$.
\smallskip

%

For integers $a_1, \ldots, a_k\geq 2$ denote by $Z(a_1, \ldots ,a_k)$ the \emph{zig-zag-chain} poset on $\sum\limits_{i=1}^ka_i-k+1$ elements arising from $k$ disjoint incomparable chains $C_1, \ldots, C_k$ of lengths $a_1, \ldots a_k$ by identifying the bottom elements of $C_i$ and $C_{i+1}$ for odd $1\leq i<k$ and the top elements of $C_i$ and $C_{i+1}$ for even $1\leq i<k$, see Figure~\ref{fig:chainzigzag}.  

\begin{figure}[ht] 
\centering
 \includegraphics[width=\textwidth]{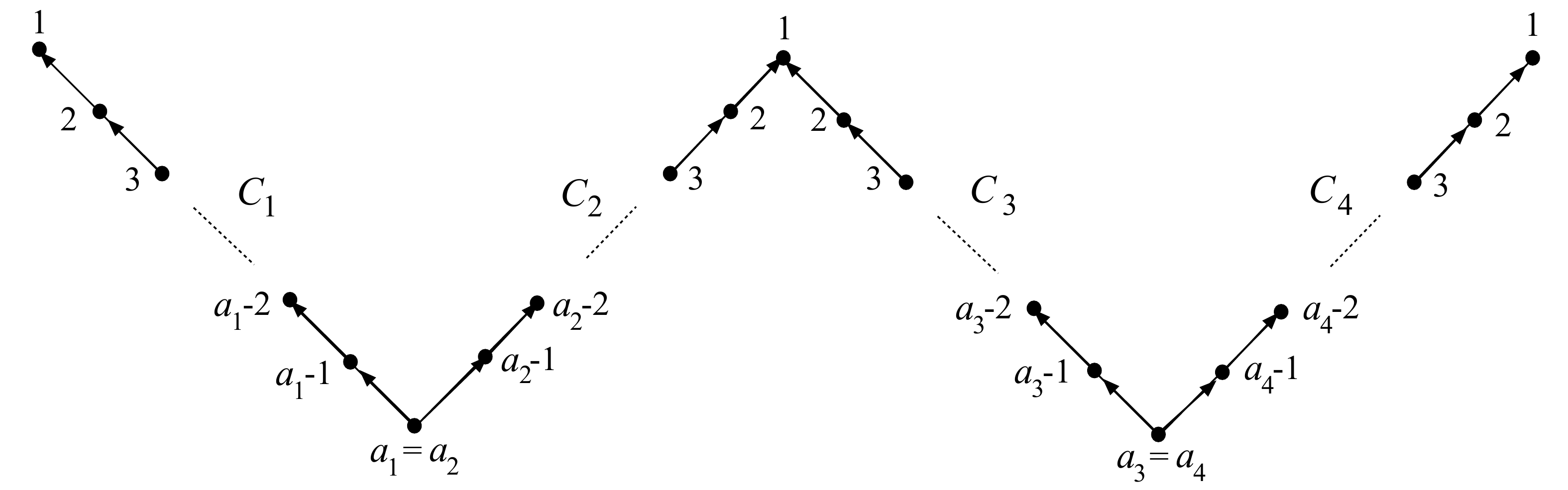}
 \caption{Zig-zag-chain $Z(a_1,a_2,a_3,a_4)$.}\label{fig:chainzigzag}
\end{figure} 

The following result relates snake polytopes with order polytopes.

\begin{thm}\label{thm:orderpoly} Let $a_1, \ldots a_k\geq 2$ be integers. Then
 a connected LPM $M$ is the snake $S(a_1, \ldots a_k)$ if and only if $Q_M$ is the order polytope of the poset $Z(a_1, \ldots ,a_k)$. 
\end{thm}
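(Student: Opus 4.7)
The plan is to exploit the explicit half-space description of $Q_M$ from Theorem~\ref{thm:dpoly} and to show that it reduces to an order-polytope description precisely when $M$ is a snake. For the forward direction, suppose $M = S(a_1, \ldots, a_k)$. First I would verify that the width $w_i := \sum_{j=1}^i (U_j - L_j)$ equals $1$ for every $1 \leq i \leq r+m-1$. Geometrically, $w_i$ is one less than the number of lattice points on $T_i = l_i \cap R(M[U,L])$; connectedness of $M$ forces $U$ and $L$ to be disjoint on interior diagonals, contributing two distinct lattice points, while the snake hypothesis (no interior lattice points) forbids any further lattice point on $T_i$, giving exactly $w_i = 1$.

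Substituting $w_i = 1$ into the description of $Q_M$ in Theorem~\ref{thm:dpoly} yields $0 \leq q_i \leq 1$ and renders the upper bound in $0 \leq (-1)^{L_{i+1}}(q_{i+1} - q_i) \leq 1$ automatic, leaving only $q_i \leq q_{i+1}$ when $L_{i+1} = 0$ and $q_{i+1} \leq q_i$ when $L_{i+1} = 1$; this is precisely the description of the order polytope $\mathcal{O}(X)$ of the poset $X$ on $\{1, \ldots, r+m-1\}$ with those cover relations. To identify $X \cong Z(a_1, \ldots, a_k)$, I would decompose the sequence $L_2, L_3, \ldots, L_{r+m-1}$ into its maximal monotone runs; inspecting the boundary of a snake shows that these runs have lengths $a_1-1, a_2-1, \ldots, a_k-1$, yielding $k$ saturated chains of sizes $a_1, \ldots, a_k$ in $X$. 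Two consecutive chains $C_i, C_{i+1}$ meet at the index where $L$ changes direction, which is a local maximum of $X$ for one parity of $i$ (matching the identification of tops in $Z$) and a local minimum for the other parity (matching the identification of bottoms), so up to orientation the Hasse diagrams coincide.

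For the converse, assume $Q_M = \mathcal{O}(Z(a_1, \ldots, a_k))$, so that $Q_M \subseteq [0,1]^{r+m-1}$. Since the base $U$ of $M$ corresponds to a point in $P_M$ whose image in $Q_M$ attains $q_i = w_i$, all widths satisfy $w_i \leq 1$, which forbids interior lattice points in the diagram and hence (by connectedness) forces $M$ to be a snake $S(b_1, \ldots, b_{k'})$. Applying the forward direction yields $Q_M = \mathcal{O}(Z(b_1, \ldots, b_{k'}))$, and since an order polytope determines its poset up to isomorphism (its join-irreducible vertices), the two parameter sequences must agree up to the ambiguity of reversing the snake. The main obstacle is the chain identification in the forward direction: the detailed shape of $L$ depends on the parity of $k$ (the final run of $L$ has different length in the even and odd cases) and one must carefully match each shared local extremum of $X$ with the top-or-bottom identification prescribed by the definition of $Z(a_1, \ldots, a_k)$.
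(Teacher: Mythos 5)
Your proposal is correct and follows essentially the same route as the paper: both reduce the statement to comparing the half-space description of $Q_M$ from Theorem~\ref{thm:dpoly} with the defining inequalities of the order polytope of a zig-zag-chain poset, using connectedness together with the absence of interior lattice points to control the widths $\sum_{j\le i}(U_j-L_j)$. If anything you are more thorough than the paper: you pin down that these widths equal exactly $1$ (the paper only notes $\le 1$) and you supply an explicit converse argument (widths $\le 1$ forcing the snake condition, plus the fact that an order polytope determines its poset), whereas the paper's proof treats only the forward direction and leaves the final matching of inequalities and the chain identification as ``it can be verified.''
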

\begin{proof}
 Let $M$ be a snake of rank $r$ with $r+m$ elements. By Theorem~\ref{thm:dpoly}, we have that $Q_M$ is a full $r+m$-dimensional distributive $(0,1)$-polytope consisting of $q\in\mathbb{R}^{r+m-1}$ such that
 
\begin{equation}\label{eq:orderzig1}
 0\leq (-1)^{L_{i+1}}(q_{i+1}-q_i)\leq 1 \text{ for all } i\in[r+m-2] 
 \end{equation}
 and
\begin{equation}\label{eq:orderzig2}
 0\leq q_i\leq \sum\limits_{j=1}^i(U_j-L_j) \text{ for all } i\in[r+m-1].
 \end{equation}

Now since $L_i=0$ or $1$ for all $i$, we can write \eqref{eq:orderzig1} as

\begin{equation}\label{eq:orderzig3}
q_{i+1}\leq q_i  \text{ if } L_{i+1}=1 \text { and } q_{i}\leq q_{i+1} \text{ if } L_{i+1}=0 \text{ for each } i\in[r+m-2].
 \end{equation}
 
Moreover, since $M$ is a snake then $\sum\limits_{j=1}^i(U_j-L_j)\leq 1$ for all $i\in [r+m-1]$, and thus, from \eqref{eq:orderzig2}, we get that
 
\begin{equation}\label{eq:orderzig4}
0\leq q_i\leq 1  \text{ for all } i\in[r+m-1].
 \end{equation}
 
As $\mathbb{R}^X$ can be identified with $\mathbb{R}^n$ ($X$ on $[n]$) then, by \eqref{o1} and \eqref{o2},  $\mathcal{O}(X)$ is the polytope consisting of those points in $q\in\mathbb{R}^n$ such that 
\begin{equation}\label{eq:polpo}
0\leq q_i\leq 1 \text{ and } q_i\leq q_j \text{ if } i\geq_X j \text{ for all } i,j\in X. 
\end{equation}

Therefore, $\mathcal{O}(Z(a_1,\dots ,a_n))$ is of dimension $\sum\limits_{i=1}^ka_i-k+1$ which is exactly the number of elements of $S(a_1, \ldots a_k)$ and, in this case, it can be verified that inequalities in \eqref{eq:polpo} are given by \eqref{eq:orderzig3} and \eqref{eq:orderzig4}.
\end{proof}

Theorem~\ref{thm:orderpoly} is  useful in order to study Ehrhart polynomials of snake polytopes. Indeed, volumes and Ehrhart polynomials of order polytopes have been already studied. In~\cite[Corollary 4.2]{Sta-86}, it was proved that $$\vol(\mathcal{O}(X))=\frac{e(X)}{n!}$$ where $e(X)$ is the number of linear extensions of $X$ and $n$ the size of $X$. Here a permutation $\sigma$ of $X$ is in the set of linear extensions $\mathcal{L}(X)$ if $i<_Xj$ implies $\sigma(i)<\sigma(j)$.
\smallskip

Given the poset $X$, we define the function $\Omega_X(k)$ as the number of order preserving mappings $\eta$ from $X$ to the total order on $[k]$ (i.e., if $x\le y$ in $X$ then $\eta(x)\le\eta(y)$). In~\cite{Sta-70}, it was proved that $\Omega_X$ is a polynomial (called the \emph{order-polynomial} of $X$). Moreover, it was shown in~\cite[Theorem 2]{Sta-70} that 
 
%

\begin{equation}\label{omega} 
\Omega_X(t+1)=\sum_{s=0}^{n-1}\omega_s\binom{n+t-s}{n}
\end{equation}

\noindent where the sequence $\omega_s$ denotes the number of linear extensions of $X$ such that exactly $s$ consecutive pairs in the linear extension are not ordered as the natural order on $[n]$.
\smallskip

In~\cite[Theorem 4.1]{Sta-86} it was proved that
\begin{equation}\label{omega1} 
\Omega_X(t+1)=L_{\mathcal{O}(X)}(t).
\end{equation}
 
 \begin{rem}\label{rem:oor} In view of \eqref{er-ser} the sequence $\omega$ in \eqref{omega} corresponds to the $h^*$-vector of $\mathcal{O}(X)$. Sometimes $\omega$ is defined in a different (but equivalent) way. For instance, in~\cite{Rei-05} it is defined setting $\mathcal{L}'(X)$ to be the set of permutations $\sigma$ of $X$ such that $i<_Xj$ implies $\sigma^{-1}(i)<\sigma^{-1}(j)$, i.e., the inverse of a linear extension in our sense, and denoting by $\omega_s$ elements $\sigma\in\mathcal{L}'(X)$ such that exactly $s$ consecutive pairs $(i,i+1)$ have $\sigma(i)>\sigma(i+1)$.
\end{rem}


Recall that the {\em rank} of a poset $X$ is the length of its largest chain and that $X$ is {\em graded} if all maximal chains have the same length.  A vector $(c_0,\dots ,c_d)$ is {\em unimodal} if there exists an index $p, 0\le p\le d$, such that $c_{i-1}\le c_i$ for $i\le p$ and $c_{j}\ge c_{j+1}$ for $j\ge p$.
\smallskip

\begin{thm}\label{thm:uni1} Let $a,b\ge 2$ be integers.
The $h^*$-vectors of the snake polytopes $P_{S(a,\ldots, a)}$ and $P_{S(a,b)}$ are unimodal.
\end{thm}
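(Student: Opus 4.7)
The plan is to reduce both cases to unimodality of the linear-extension-descent sequence $(\omega_s)$ of the zigzag poset via Theorems~\ref{thm:dpoly} and~\ref{thm:orderpoly}. Those theorems give an affine equivalence $P_{S(a_1,\ldots,a_k)}\simeq\mathcal{O}(Z(a_1,\ldots,a_k))$, so the $h^*$-vectors coincide; by \eqref{omega} and \eqref{omega1} this vector is exactly $(\omega_s(Z(a_1,\ldots,a_k)))_{s\ge 0}$.

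For $S(a,b)$ I will compute $\omega_s$ in closed form. The poset $Z(a,b)$ has a unique minimum $m$ with two chains of lengths $a$ and $b$ above it; label them naturally so that the first chain receives $\{1,2,\ldots,a\}$ and the upper portion of the second chain receives $\{a+1,\ldots,a+b-1\}$. Every linear extension starts with $m=1$ and continues as a shuffle of the remaining two chains, uniquely encoded by a word $w\in\{x,y\}^{a+b-2}$ with $a-1$ letters $x$ (for the first-chain entries). Because every $x$-label lies strictly below every $y$-label, consecutive $x$'s and consecutive $y$'s appear in increasing order, so a descent of the extension occurs at position $i$ if and only if $w_iw_{i+1}=yx$. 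Writing such a word uniquely in block form
\[
x^{\alpha_0}\,y^{\beta_1}x^{\alpha_1}\cdots y^{\beta_s}x^{\alpha_s}\,y^{\beta_{s+1}}
\]
with $\alpha_i,\beta_j\ge 1$ on the inner blocks and $\alpha_0,\beta_{s+1}\ge 0$, a stars-and-bars count yields $\binom{a-1}{s}$ admissible $(\alpha_i)$-tuples and $\binom{b-1}{s}$ admissible $(\beta_j)$-tuples, hence $\omega_s=\binom{a-1}{s}\binom{b-1}{s}$. Each factor is log-concave in $s$, and the product of non-negative log-concave sequences is again log-concave, so the $h^*$-vector of $P_{S(a,b)}$ is log-concave and, having no internal zeros, unimodal.

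For $S(a,a,\ldots,a)$ I will invoke unimodality of $h^*$ for order polytopes of graded posets. The poset $Z(a,a,\ldots,a)$ is graded: its maximal chains are exactly the defining chains $C_i$, each of length $a-1$. The theorem of Reiner and Welker asserts that the $h^*$-polynomial of the order polytope of a graded naturally-labeled poset is unimodal; applying it to $Z(a,\ldots,a)$ yields the conclusion.

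The main obstacle I foresee is the combinatorial book-keeping in the $S(a,b)$ case: one must verify that under the chosen natural labeling descents correspond bijectively to the $yx$-substrings of the shuffle word, and that the block decomposition is indeed in bijection with the pairs of compositions counted by $\binom{a-1}{s}\binom{b-1}{s}$. The equal-arms case is conceptually short once $Z(a,\ldots,a)$ is recognized as graded, and then relies essentially on the cited Reiner--Welker theorem.
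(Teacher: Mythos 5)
Your proof is correct, and the two halves compare differently with the paper. For $P_{S(a,\ldots,a)}$ you argue exactly as the paper does: reduce via Theorems~\ref{thm:dpoly} and~\ref{thm:orderpoly} to the order polytope of the graded poset $Z(a,\ldots,a)$ and invoke the Reiner--Welker unimodality theorem for graded naturally labeled posets, so there is nothing to add there. For $P_{S(a,b)}$ your route is genuinely different: the paper also deletes the unique minimum of $Z(a,b)$ to obtain a disjoint union of two chains, but then simply cites Simion's theorem that $\omega$ is real-rooted (hence unimodal) for disjoint unions of chains, whereas you compute the descent generating sequence explicitly as $\omega_s=\binom{a-1}{s}\binom{b-1}{s}$ via the block decomposition of shuffle words and conclude by log-concavity. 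Your count is right (the $yx$-pattern identification of descents under your natural labeling is valid, the stars-and-bars count gives $\binom{a-1}{s}\binom{b-1}{s}$, and the total $\sum_s\binom{a-1}{s}\binom{b-1}{s}=\binom{a+b-2}{a-1}$ matches the number of linear extensions; one can also check it against Theorem~\ref{thm:ehrhart} on small cases such as $S(2,2)$), and log-concavity of a nonnegative sequence without internal zeros does imply unimodality. What your approach buys is a self-contained, citation-free closed formula for the $h^*$-vector of $P_{S(a,b)}$, which is more information than the paper records; what it gives up is the real-rootedness that Simion's theorem provides, which the paper exploits in the concluding remarks (via the product theorem of Wagner) to extend unimodality to direct sums of snakes of the form $S(a,b)$. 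If you wanted to recover that extension from your formula you would need to additionally verify real-rootedness of $\sum_s\binom{a-1}{s}\binom{b-1}{s}t^s$, which log-concavity alone does not give.
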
 

\begin{proof} 
By Theorem~\ref{thm:dpoly} we have that $P_{S(a,\ldots, a)}$ and $P_{S(a,b)}$ have the same Ehrhart polynomial as $Q_{S(a,\ldots, a)}$ and $Q_{S(a,b)}$. By Theorem~\ref{thm:orderpoly} both these have the same Ehrhart polynomial as $\mathcal{O}(Z(a,\ldots, a))$ and $\mathcal{O}(Z(a,b))$. Now, Remark \ref{rem:oor} the $h^*$-vectors of the latter two polytopes coincide with the sequence $\omega$ in \eqref{omega} associated to $Z(a,\ldots, a)$ and $Z(a,b)$, respectively. In~\cite{Rei-05}, it was proved that if $X$ is a graded poset then the sequence $\omega$ is unimodal. Since the zig-zag-chain poset $Z(a,\ldots, a)$ is graded, the latter implies that the $h^*$-vector of $P_{S(a,\dots , a)}$ is unimodal. 
\smallskip

Now, observe that all linear extensions of $Z(a,b)$ begin with the unique minimal element, which thus can be removed from $Z(a,b)$ without affecting $\omega$. The resulting poset is a disjoint union of two chains. It can be easily shown by hand, that $\omega$ is unimodal in this case, but for shortness let us just refer to the stronger result of~\cite{Sim-84} showing unimodality of $\omega$ for any disjoint union of chains. Thus, implying the unimodularity of the $h^*$-vector of $P_{S(a,b)}$.
%
%
%
%
%
%
\end{proof}

\section{Concluding remarks}\label{sec:concluding}

{Theorem~\ref{thm:uni1} supports the more general conjecture due to Loera, Haws, and K\"oppe~\cite[Conjecture 2]{Loer-09} asserting that the $h^*$-vector of any matroid basis polytope is unimodal.}

{A famous conjecture due to Neggers~\cite{Neg-78} states that the real-rootedness of the polynomial with coefficients $h^*$ associated to an order polytope implies unimodality of $h^*$. Since Neggers' conjecture holds for all naturally labeled posets on at most $8$ elements (see~\cite{Stem-97}) then, by Theorem \ref{thm:orderpoly}, we obtain the unimodality of $h^*$ for snakes $S(a_1,\ldots, a_k)$ with $a_1+\ldots+a_k-(k-1)\leq 8$.
Moreover, it follows from a result in~\cite{Wag-92} that if the $h^*$-polynomials of $P$ and $Q$ are real-rooted then so is the $h^*$-polynomial of their product $P\times Q$. Since the result of~\cite{Sim-84} used for $S(a,b)$ in Theorem~\ref{thm:uni1} indeed states real-rootedness, we get that Theorem~\ref{thm:uni1} extends to LPMs that are direct sums of snakes of the form $S(a,b)$. However, for graded posets real-rootedness is not known, see~\cite[Question 2]{Bra-15}, and thus unimodality of $h^*$ for direct sums of snakes of the form $S(a, \ldots, a)$
cannot be concluded.}

{Finally, while Neggers' conjecture has been disproved for general posets~\cite{Stem-07}, unimodality of $h^*$ for order polytopes remains open, see~\cite[Question 1]{Bra-15}.} { In view of our results, perhaps zig-zag-chain posets are good candidates to be investigated with respect to these questions.}
%
%
%
%
%
%

\bibliography{lpbib}
\bibliographystyle{my-siam}

\end{document}